\renewcommand{\@seccntformat}[1]{{\csname the#1\endcsname}.\hspace{.5em}}
\newtheorem{thm}{Theorem}[section]
\newtheorem{lem}[thm]{Lemma}
\newtheorem{remark}[thm]{Remark}
\renewcommand{\qed}{\hfill$\Box$\medskip}
\renewcommand{\thefootnote}{*}
\numberwithin{equation}{section}
\begin{document}

\begin{center}
{\large\bf Proof of two supercongruences conjectured by Z.-W. Sun}
\end{center}

\vskip 2mm \centerline{Guo-Shuai Mao}
\begin{center}
{\footnotesize $^1$Department of Mathematics, Nanjing
University of Information Science and Technology, Nanjing 210044,  People's Republic of China\\
{\tt maogsmath@163.com  } }
\end{center}
\vskip 2mm \centerline{Chen-Wei Wen}
\begin{center}
{\footnotesize $^2$Department of Mathematics, Nanjing
University, Nanjing 210093,  People's Republic of China\\
{\tt wenchenwei@126.com } }
\end{center}

\vskip 0.7cm \noindent{\bf Abstract.}
In this paper, we prove two supercongruences conjectured by Z.-W. Sun via the Wilf-Zeilberger method. One of them is, for any prime $p>3$,
\begin{align*}
\sum_{n=0}^{p-1}\frac{6n+1}{256^n}\binom{2n}n^3&\equiv p(-1)^{(p-1)/2}-p^3E_{p-3}\pmod{p^4}.
\end{align*}
In fact, this supercongruence is a generalization of a supercongruence of van Hamme.

\vskip 3mm \noindent {\it Keywords}: Supercongruence; Binomial coeficients; Wilf-Zeilberger method; Euler numbers.

\vskip 0.2cm \noindent{\it AMS Subject Classifications:} 11B65, 11A07, 11B68, 33F10, 05A10.

\renewcommand{\thefootnote}{**}

\section{Introduction}

\qquad In the past decade, many researchers studied supercongruences via the Wilf-Zeilberger (WZ) method. For instance, W. Zudilin \cite{zudilin-jnt-2009} proved several Ramanujan-type supercongruences by the WZ method. One of them, conjectured by van Hamme, says that for any odd prime $p$,
\begin{align}\label{wzpr}
\sum_{k=0}^{(p-1)/2}(4k+1)(-1)^k\left(\frac{\left(\frac12\right)_k}{k!}\right)^3\equiv(-1)^{(p-1)/2}p\pmod{p^3},
\end{align}
where $(a)_n=a(a+1)\ldots(a+n-1) (n\in\{1,2,\ldots\})$ with $(a)_0=1$ is the raising factorial for $a\in\mathbb{C}$.

Chen, Xie and He \cite{CXH-rama-2016} confirmed a supercongruence conjetured by Z.-W. Sun \cite{sun-scm-2011}, which says that for any prime $p>3$,
$$
\sum_{k=0}^{p-1}\frac{3k+1}{(-8)^k}\binom{2k}k^3\equiv p(-1)^{(p-1)/2}+p^3E_{p-3}\pmod{p^4},
$$
where $E_n$ are the Euler numbers defined by
$$E_0=1,\ \mbox{and}\ E_n=-\sum_{k=1}^{\lfloor n/2\rfloor}\binom{n}{2k}E_{n-2k}\ \mbox{for}\ n\in\{1,2,\ldots\}.$$

For $n\in\mathbb{N}$, define
$$H_n:=\sum_{0<k\leq n}\frac1k,\ H_n^{(2)}:=\sum_{0<k\leq n}\frac1{k^2},\ H_0=H_0^{(2)}=0,$$
where $H_n$ with $n\in\mathbb{N}$ are often called the classical harmonic numbers. Let $p>3$ be a prime. J. Wolstenholme \cite{wolstenholme-qjpam-1862} proved that
$$H_{p-1}\equiv0\pmod{p^2}\ \mbox{and}\ H_{p-1}^{(2)}\equiv0\pmod p,$$
which imply that
\begin{align}
\binom{2p-1}{p-1}\equiv1\pmod{p^3}.\label{2p1p}
\end{align}

Z.-W. Sun \cite{sun-ijm-2012} proved the following supercongruence by the WZ method, for any odd prime $p$,
\begin{equation}\label{sun}
\sum_{k=0}^{p-1}\frac{4k+1}{(-64)^k}\binom{2k}k^3\equiv(-1)^{\frac{(p-1)}2}p+p^3E_{p-3}\pmod{p^4}.
\end{equation}

Guo and Liu \cite{gl-arxiv-2019} showed that for any prime $p>3$,
\begin{equation}\label{glp4}
\sum_{k=0}^{(p+1)/2}(-1)^k(4k-1)\frac{\left(-\frac12\right)_k^3}{(1)_k^3}\equiv p(-1)^{(p+1)/2}+p^3(2-E_{p-3})\pmod{p^4}.
\end{equation}
Guo also researched $q$-analogues of Ramanujan-type supercongruences and $q$-analogues of supercongruences of van Hamme (see, for instance, \cite{g-jmaa-2018,guo-rama-2019}).

Long \cite{long-2011-pjm} proved a conjecture of van Hamme \cite{vhamme}, which is, for any prime $p>3$,
$$
\sum_{n=0}^{(p-1)/2}\frac{6n+1}{256^n}\binom{2n}n^3\equiv p(-1)^{(p-1)/2}\pmod{p^4}.
$$

In this paper, we first obtain the following result which confirms a conjecture of Sun\cite{sun-scm-2011}:
\begin{thm}\label{Thsun} Let $p>3$ be a prime. Then
\begin{equation}\label{6n1256}
\sum_{n=0}^{p-1}\frac{6n+1}{256^n}\binom{2n}n^3\equiv p(-1)^{(p-1)/2}-p^3E_{p-3}\pmod{p^4}.
\end{equation}
\end{thm}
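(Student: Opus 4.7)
The plan is to split the sum at $n = (p-1)/2$, invoke Long's supercongruence for the lower half, and reduce the remaining tail to a residual mod-$p$ congruence involving $E_{p-3}$ that I would then handle by the Wilf-Zeilberger method. Long \cite{long-2011-pjm} already proved
$$\sum_{n=0}^{(p-1)/2}\frac{6n+1}{256^n}\binom{2n}{n}^3 \equiv p(-1)^{(p-1)/2}\pmod{p^4},$$
so it suffices to show that the tail
$$T_p:=\sum_{n=(p+1)/2}^{p-1}\frac{6n+1}{256^n}\binom{2n}{n}^3$$
satisfies $T_p \equiv -p^3 E_{p-3}\pmod{p^4}$.

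For $n$ in the tail range, $p \leq 2n < 2p$ while $n < p$, so $\binom{2n}{n}$ contains exactly one factor of $p$; hence $T_p \equiv 0\pmod{p^3}$ for free, and only the residue of $T_p/p^3$ modulo $p$ need be computed. Substituting $n = p-1-k$ for $k = 0, 1, \ldots, (p-3)/2$ and applying Wilson's theorem, I would first establish
$$\binom{2(p-1-k)}{p-1-k} \equiv -\frac{p}{(2k+1)\binom{2k}{k}}\pmod{p^2}.$$
Cubing, and combining with $6(p-1-k)+1 \equiv -(6k+5)\pmod{p}$ and $256^{p-1-k}\equiv 256^{-k}\pmod{p}$ (Fermat's little theorem), yields
$$T_p \equiv p^3\sum_{k=0}^{(p-3)/2}\frac{(6k+5)\cdot 256^k}{(2k+1)^3\binom{2k}{k}^3}\pmod{p^4}.$$

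The theorem thus reduces to proving the residual congruence
$$(\star)\qquad \sum_{k=0}^{(p-3)/2}\frac{(6k+5)\cdot 256^k}{(2k+1)^3\binom{2k}{k}^3} \equiv -E_{p-3}\pmod{p},$$
which is the main obstacle. I would attack $(\star)$ via the Wilf-Zeilberger method: construct a WZ pair $(F(n,k),G(n,k))$ adapted to the hypergeometric structure of the summand, sum the telescoping identity $F(n+1,k)-F(n,k)=G(n,k+1)-G(n,k)$ over a triangular region, and extract $-E_{p-3}$ from the resulting boundary contributions using Wolstenholme-type harmonic identities and standard Euler-number congruences. This mirrors the strategy by which Sun \cite{sun-ijm-2012} and Chen-Xie-He \cite{CXH-rama-2016} produced the $E_{p-3}$ terms in their companion supercongruences \eqref{sun} and the $(3k+1)/(-8)^k$ identity. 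The delicate step is constructing the WZ pair that simultaneously handles the weight $256^k$ and the cubic denominator $(2k+1)^3\binom{2k}{k}^3$; once such a pair is found, the subsequent telescoping and Euler-number evaluation should proceed without serious difficulty.
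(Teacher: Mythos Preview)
Your reduction is sound: splitting at $(p-1)/2$, quoting Long, and computing the tail via the reflection $\binom{2(p-1-k)}{p-1-k}\equiv -p/\bigl((2k+1)\binom{2k}{k}\bigr)\pmod{p^2}$ correctly lands you at the mod-$p$ congruence $(\star)$. But the proposal stops precisely where the work begins. Given Long's theorem, $(\star)$ is not a residual cleanup; it is \emph{equivalent} to Theorem~\ref{Thsun}, so you have reformulated the problem rather than simplified it. You then say you would ``construct a WZ pair'' adapted to the summand of $(\star)$, but you give no candidate, and the hypergeometric shape of $(\star)$---a cubed reciprocal central binomial with weight $256^k/(2k+1)^3$---is no more amenable to certificate-finding than the original sum. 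That unspecified construction is the entire content of the theorem, and the proposal leaves it open.

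The paper's route is structurally different and more economical. It applies the WZ method to the \emph{full} sum $\sum_{n=0}^{p-1}F(n,0)$ using an explicit pair $(F,G)$ taken from He~\cite{he-jnt-2015}, telescopes to $F(p-1,p-1)+\sum_{k=1}^{p-1}G(p,k)$, and then evaluates the boundary in three ranges. The $-p^3E_{p-3}$ contribution comes from the range $1\le k\le (p-1)/2$ via the \emph{already known} scalar congruence $\sum_{k=1}^{(p-1)/2}4^k/\bigl(k(2k-1)\binom{2k}{k}\bigr)\equiv 2E_{p-3}\pmod p$ of Sun~\cite{sun-jnt-2011}; the remaining pieces $F(p-1,p-1)$, $G(p,(p+1)/2)$, and $\sum_{k=(p+3)/2}^{p-1}G(p,k)$ assemble to $p(-1)^{(p-1)/2}$, so Long's supercongruence is recovered as a by-product rather than assumed. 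Thus where your plan needs Long as input \emph{and} a yet-to-be-found WZ pair for $(\star)$, the paper needs only one existing WZ pair and one existing Euler-number congruence, with no new certificate to discover.
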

 Via the WZ method, Zudilin \cite{zudilin-jnt-2009} proved that for any odd prime $p$,
\begin{align}\label{zu}
\sum_{n=0}^{p-1}\frac{\left(\frac12\right)_n\left(\frac12\right)_{2n}}{n!^3}\frac{20n+3}{2^{4n}}\equiv3 p(-1)^{(p-1)/2}\pmod{p^3}.
\end{align}
Z.-W. Sun \cite{sun-ijm-2012} used the WZ pair which was used by Zudilin \cite{zudilin-jnt-2009} to prove the following supercongruence, for any prime $p>3$,
\begin{equation*}
\sum_{k=0}^{(p-1)/2}\frac{20k+3}{(-2^{10})^k}{4k\choose k,k,k,k}\equiv p(-1)^{\frac{(p-1)}2}(2^{p-1}+2-(2^{p-1}-1)^2)\pmod{p^4}.
\end{equation*}

Our second result is the following congruence which generalizes Sun's result.
\begin{thm}\label{Thsun2} Let $p$ be an odd prime. Then
\begin{equation*}
\sum_{n=0}^{p-1}\frac{20n+3}{(-2^{10})^n}{4n\choose n,n,n,n}\equiv 3p(-1)^{(p-1)/2}+3p^3E_{p-3}\pmod{p^4}.
\end{equation*}
\end{thm}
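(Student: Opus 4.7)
I would follow the Wilf--Zeilberger approach of Zudilin \cite{zudilin-jnt-2009} and Sun \cite{sun-ijm-2012}. The first step is to rewrite the multinomial via
$$\binom{4n}{n,n,n,n}=\binom{4n}{2n}\binom{2n}{n}^2=\frac{64^n\,(1/2)_n(1/2)_{2n}}{(n!)^3},$$
which recasts the statement as
$$S:=\sum_{n=0}^{p-1}(20n+3)\,\frac{(-1)^n(1/2)_n(1/2)_{2n}}{16^n(n!)^3}\equiv 3p(-1)^{(p-1)/2}+3p^3E_{p-3}\pmod{p^4}.$$
Compared with Zudilin's identity \eqref{zu}, the sum $S$ carries an extra $(-1)^n$ and must be evaluated one $p$-power more accurately; compared with Sun's mod-$p^4$ truncated formula quoted in the introduction, it extends the range of summation to $0\le n\le p-1$.

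The second step is to construct a Zudilin-style WZ pair $(F,G)$ satisfying
$$F(n+1,k)-F(n,k)=G(n,k+1)-G(n,k),$$
with $F(n,0)$ equal (up to a fixed constant) to the summand of $S$. Gosper's algorithm applied to the ratio of consecutive summands of $S$ yields explicit rational-function expressions for $F$ and $G$. Summing the WZ relation first over $k\ge 0$ (telescoping $G$) and then over $0\le n\le p-1$ (telescoping $F$) collapses $S$ to
$$S=F(p,0)-F(0,0)-\sum_{n=0}^{p-1}G(n,0).$$
Here $F(0,0)=3$ is the initial term, and $F(p,0)$ is controlled modulo $p^4$ via Wolstenholme's congruence \eqref{2p1p} together with the standard $p$-adic estimates for $(1/2)_p$ and $(1/2)_{2p}$; this analysis should furnish the leading $3p(-1)^{(p-1)/2}$.

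The third step is the evaluation of $\sum_{n=0}^{p-1}G(n,0)\pmod{p^4}$. For $n\ge(p+1)/2$ the Pochhammer symbols $(1/2)_n$ and $(1/2)_{2n}$ each contribute a factor of $p$, so only low-order analysis is required in the tail; in the head range $0\le n\le(p-1)/2$ a harmonic-sum expansion reduces the contributions to sums of the type $\sum_{k=1}^{(p-1)/2}1/(2k-1)^2$, which are expressible in terms of $E_{p-3}$ by a classical $p$-adic congruence for the Euler numbers (as exploited, for example, in \cite{CXH-rama-2016}). Collecting everything produces the claimed right-hand side.

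The principal obstacle is the identification of the coefficient of $p^3$ as exactly $3E_{p-3}$: the WZ telescoping itself mirrors Sun's proof of \eqref{sun} and should be routine, but the delicate mod-$p^4$ bookkeeping of the various Fermat-quotient and harmonic-sum contributions, and the verification that every non-Euler piece cancels, forms the technically hardest step.
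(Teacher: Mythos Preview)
Your overall instinct—use the Zudilin--Sun WZ pair and expect Fermat quotients plus an Euler-number congruence to control the $p^3$ coefficient—is sound, but the telescoping you write down is not what any WZ relation actually delivers, and the pieces of the answer are misattributed.

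From the relation used in \cite{zudilin-jnt-2009,sun-ijm-2012} (and in the paper), namely $F(n,k-1)-F(n,k)=G(n+1,k)-G(n,k)$, one sums over $0\le n\le p-1$ and then over $1\le k\le p-1$ to obtain
\[
\sum_{n=0}^{p-1}F(n,0)=F(p-1,p-1)+\sum_{k=1}^{p-1}G(p,k),
\]
not $F(p,0)-F(0,0)-\sum_n G(n,0)$. Your displayed identity cannot come from the relation you state either: summing $F(n+1,k)-F(n,k)=G(n,k+1)-G(n,k)$ over $k$ relates $\sum_k F(n,k)$ to values of $G$, never $\sum_n F(n,0)$; and Gosper applied to the one-variable summand fails outright, which is precisely why the second variable $k$ is needed. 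So your third step is aimed at the wrong object. The real work is the mod-$p^4$ evaluation of $\sum_{k=1}^{p-1}G(p,k)$, which must be split into the ranges $1\le k\le(p-1)/2$, the single term $k=(p+1)/2$, and $(p+3)/2\le k\le p-1$.

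Your location of the individual contributions is also off. The leading term $3p(-1)^{(p-1)/2}$ is not a boundary $F$ value; it is carried by the single summand $G\bigl(p,(p+1)/2\bigr)$, whose evaluation rests on Morley's congruence and $H_{(p-1)/2}\equiv-2q_p(2)+pq_p(2)^2\pmod{p^2}$. The boundary term $F(p-1,p-1)$ is only of size $p^2$ and cancels exactly against the Fermat-quotient contributions from the range $(p+3)/2\le k\le p-1$. Finally, the Euler-number piece $3p^3E_{p-3}$ comes from the low range $1\le k\le(p-1)/2$ via Sun's congruence
\[
\sum_{k=1}^{(p-1)/2}\frac{4^k}{k(2k-1)\binom{2k}{k}}\equiv 2E_{p-3}\pmod p,
\]
which is of a rather different shape from the odd-harmonic sum $\sum_{k}1/(2k-1)^2$ you propose. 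Until the telescoping identity and these attributions are corrected, the ``delicate mod-$p^4$ bookkeeping'' you anticipate cannot even be set up.
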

\begin{remark}\rm In \cite{sun-ijm-2012}, Sun said that he ever proved the congruence in Theorem \ref{Thsun2}, but he lost the draft containing the complicated details. Recently, he told us to prove this congruence.
\end{remark}
Our main tool in this paper is the WZ method. We shall prove Theorem \ref{Thsun} in the next Section, and the last Section is devoted to prove Theorem \ref{Thsun2}.
\section{Proof of Theorem \ref{Thsun}}
\qquad We will use the following WZ pair which appears in \cite{he-jnt-2015} to prove Theorem \ref{Thsun}. For nonnegative integers $n, k$, define
$$
F(n,k)=\frac{(6n-2k+1)}{2^{8n-2k}}\frac{\binom{2n}n\binom{2n+2k}{n+k}\binom{2n-2k}{n-k}\binom{n+k}{n}}{\binom{2k}k}
$$
and
$$
G(n,k)=\frac{n^2\binom{2n}n\binom{2n+2k}{n+k}\binom{2n-2k}{n-k}\binom{n+k}{n}}{2^{8n-2k-4}(2n+2k-1)\binom{2k}k}.
$$
Clearly $F(n,k)=G(n,k)=0$ if $n<k$. It is easy to check that
\begin{equation}\label{FG}
F(n,k-1)-F(n,k)=G(n+1,k)-G(n,k)
\end{equation}
for all nonnegative integer $n$ and $k>0$.

Summing (\ref{FG}) over $n$ from $0$ to $p-1$ we have
$$
\sum_{n=0}^{p-1}F(n,k-1)-\sum_{n=0}^{p-1}F(n,k)=G(p,k)-G(0,k)=G(p,k).
$$
Furthermore, summing both side of the above identity over $k$ from $1$ to $p-1$, we obtain
\begin{align}\label{wz1}
\sum_{n=0}^{p-1}F(n,0)=F(p-1,p-1)+\sum_{k=1}^{p-1}G(p,k).
\end{align}
\begin{lem} \label{Fp1p1} Let $p>3$ be a prime. Then
$$F(p-1,p-1)\equiv -3p^2-12p^3+18p^3q_p(2)\pmod{p^4},$$
where $q_p(2)$ stands for the Fermat quotient $(2^{p-1}-1)/p$.
\end{lem}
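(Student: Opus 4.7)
The plan is to evaluate $F(p-1,p-1)$ in closed form and then reduce modulo $p^4$ using classical congruences on central binomial coefficients. Substituting $n=k=p-1$ into the definition of $F(n,k)$, the factor $\binom{2n-2k}{n-k}$ collapses to $\binom{0}{0}=1$, and $\binom{2k}{k}=\binom{2p-2}{p-1}$ in the denominator cancels one copy of $\binom{2p-2}{p-1}$ in the numerator, leaving
\begin{equation*}
F(p-1,p-1)=\frac{(4p-3)\binom{2p-2}{p-1}\binom{4p-4}{2p-2}}{64^{p-1}}.
\end{equation*}

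Next I would convert the two remaining awkward binomials into the cleaner central binomials $\binom{2p-1}{p-1}$ and $\binom{4p}{2p}$ via the elementary factorial identities
\begin{equation*}
\binom{2p-2}{p-1}=\frac{p}{2p-1}\binom{2p-1}{p-1},\qquad \binom{4p-4}{2p-2}=\frac{p(2p-1)}{2(4p-1)(4p-3)}\binom{4p}{2p}.
\end{equation*}
After substitution, the factors $2p-1$ and $4p-3$ cancel cleanly and one is left with
\begin{equation*}
F(p-1,p-1)=\frac{p^2\binom{2p-1}{p-1}\binom{4p}{2p}}{2(4p-1)\,64^{p-1}}.
\end{equation*}
The explicit $p^2$ in the numerator means that, to pin down $F(p-1,p-1)$ modulo $p^4$, each remaining factor only needs to be known modulo $p^2$.

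The two binomials are handled by classical results: Wolstenholme's theorem, recalled as (\ref{2p1p}) in the excerpt, gives $\binom{2p-1}{p-1}\equiv 1\pmod{p^3}$, and the Jacobsthal/Kazandzidis refinement of Lucas' theorem gives $\binom{4p}{2p}\equiv 6\pmod{p^3}$ for $p\geq 5$. The remaining two factors succumb to elementary geometric-series expansion: $(4p-1)^{-1}\equiv -(1+4p)\pmod{p^2}$, while $2^{p-1}=1+p\,q_p(2)$ raised to the sixth power gives $64^{-(p-1)}\equiv 1-6p\,q_p(2)\pmod{p^2}$. Multiplying the four mod-$p^2$ expansions together, then by the scalar $3p^2=p^2\cdot 6/2$, and collecting terms modulo $p^4$ produces $-3p^2-12p^3+18p^3 q_p(2)$, which is the stated congruence. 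The only step that requires any thought beyond bookkeeping is the congruence $\binom{4p}{2p}\equiv 6\pmod{p^3}$; if one prefers not to cite Jacobsthal/Kazandzidis directly, it can be derived in a short calculation from Wolstenholme-type estimates on the factorials defining $\binom{4p}{2p}$. Beyond that, the proof is a purely mechanical series-expansion exercise.
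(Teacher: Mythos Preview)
Your proof is correct and is essentially identical to the paper's: both rewrite $F(p-1,p-1)$ as $\dfrac{p^{2}\binom{2p-1}{p-1}\binom{4p}{2p}}{2(4p-1)\,2^{6p-6}}$, invoke Wolstenholme for $\binom{2p-1}{p-1}\equiv 1\pmod{p^{3}}$ and Jacobsthal for $\binom{4p}{2p}\equiv 6\pmod{p^{3}}$, and finish by expanding $(4p-1)^{-1}$ and $2^{-6(p-1)}$ modulo $p^{2}$. The only cosmetic difference is that the paper routes through $\binom{4p-3}{2p-2}$ and $\binom{4p-1}{2p-1}=\tfrac{1}{2}\binom{4p}{2p}$ before applying Jacobsthal, whereas you pass directly to $\binom{4p}{2p}$.
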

\begin{proof} By the definition of $F(n,k)$, we have
\begin{align*}
F(p-1,p-1)&=\frac{4p-3}{2^{6p-6}}\binom{4p-4}{2p-2}\binom{2p-2}{p-1}=\frac{p\binom{2p-1}{p-1}\binom{4p-3}{2p-2}}{2^{6p-6}}=\frac{p^2\binom{2p-1}{p-1}\binom{4p-1}{2p-1}}{(4p-1)2^{6p-6}}.
\end{align*}
It is known that Jocobsthal's congruence is as follows: For primes $p>3$, integers $a,b$ and integers $r,s\geq1$,
\begin{align}\label{Jacob}
\binom{ap^r}{bp^s}/\binom{ap^{r-1}}{bp^{s-1}}\equiv1 \pmod{p^{r+s+\tt{min}\{r,s\}}}.
\end{align}
Thus,
$$\binom{4p-1}{2p-1}=\frac12\binom{4p}{2p}\equiv\frac12\binom{4}{2}\equiv3\pmod{p^3}.$$
This, with (\ref{2p1p}) and $2^{p-1}=1+pq_p(2)$ yields that
$$
F(p-1,p-1)\equiv\frac{3p^2}{(4p-1)2^{6p-6}}\equiv-3p^2-12p^3+18p^3q_p(2)\pmod{p^4}.
$$
Therefore the proof of Lemma \ref{Fp1p1} is complete.
\end{proof}
By the definition of $G(n,k)$ we have
\begin{align}\label{Gpk}
G(p,k)&=\frac{p^2\binom{2p}{p}\binom{2p+2k}{p+k}\binom{2p-2k}{p-k}\binom{p+k}{p}}{2^{8p-4-2k}(2p+2k-1)\binom{2k}k}=\frac{p^2\binom{2p}{p}\binom{2p+2k}{p}\binom{2p-2k}{p-k}\binom{p+2k}{k}}{2^{8p-4-2k}(2p+2k-1)\binom{2k}k},
\end{align}
where we used the binomial transformation
$$
\binom{n}{k}\binom{k}{j}=\binom{n}{j}\binom{n-j}{k-j}.
$$
\begin{lem}\label{sunimp}{\rm (\cite[(3.1)]{sun-jnt-2011})}
\begin{equation*}
\sum_{k=1}^{(p-1)/2}\frac{4^k}{k(2k-1)\binom{2k}k}\equiv2E_{p-3}\pmod p.
\end{equation*}
\end{lem}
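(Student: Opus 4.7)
My plan is to reduce the given sum to a standard form that can be evaluated via classical mod-$p$ identities for Euler numbers, rather than working directly with the combination $1/(k(2k-1))$.

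The first step is an algebraic simplification: from $\binom{2k}{k}=\frac{2(2k-1)}{k}\binom{2k-2}{k-1}$, one verifies the integer identity
$$\frac{4^k}{k(2k-1)\binom{2k}{k}}=\frac{2\cdot 4^{k-1}}{(2k-1)^2\binom{2k-2}{k-1}},\qquad k\ge 1,$$
so that after shifting $k\mapsto j+1$,
$$S:=\sum_{k=1}^{(p-1)/2}\frac{4^k}{k(2k-1)\binom{2k}{k}}=2\sum_{j=0}^{(p-3)/2}\frac{4^j}{(2j+1)^2\binom{2j}{j}}.$$
This already looks like the kind of Ramanujan-type sum that tends to evaluate to $E_{p-3}$ modulo $p$.

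Next, I would apply the classical congruence $\binom{(p-1)/2}{j}\equiv(-1)^j\binom{2j}{j}/4^j\pmod p$ (which follows from $(p-1)/2-i\equiv -(2i+1)/2\pmod p$) to rewrite
$$S\equiv 2\sum_{j=0}^{(p-3)/2}\frac{(-1)^j}{(2j+1)^2\binom{(p-1)/2}{j}}\pmod p,$$
and then exploit the symmetry $j\mapsto(p-1)/2-l$. Since $(2j+1)^2=(p-2l)^2\equiv 4l^2\pmod{p^2}$ and $\binom{(p-1)/2}{j}=\binom{(p-1)/2}{l}$, the sum transforms into
$$S\equiv \frac{(-1)^{(p-1)/2}}{2}\sum_{l=1}^{(p-1)/2}\frac{(-1)^l}{l^2\binom{(p-1)/2}{l}}\pmod p.$$

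To close the argument I would invoke a Glaisher–Sun type congruence of the form
$$\sum_{l=1}^{(p-1)/2}\frac{(-1)^l}{l^2\binom{(p-1)/2}{l}}\equiv 4(-1)^{(p-1)/2}E_{p-3}\pmod p,$$
which yields $S\equiv 2E_{p-3}\pmod p$. The main obstacle is precisely this last identity: connecting an inverse central-binomial sum to the Euler number $E_{p-3}$. If this exact form is not directly available in the literature, a fallback is to decompose $1/\binom{(p-1)/2}{l}$ via partial fractions in $l$ (turning it into a combination of simple rational terms) and reduce to the more familiar congruence $E_{p-3}\equiv 2\sum_{k=0}^{(p-3)/2}(-1)^k/(2k+1)^2\pmod p$; alternatively, one could produce a Gosper/WZ certificate for the transformed sum $\sum_{j}4^j/((2j+1)^2\binom{2j}{j})$ and then verify the congruence $2E_{p-3}$ directly at finite level.
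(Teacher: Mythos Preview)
The paper does not prove this lemma at all: it is quoted verbatim as equation~(3.1) of \cite{sun-jnt-2011} and used as a black box. So there is no ``paper's own proof'' to compare your argument against; any self-contained derivation you supply goes beyond what the authors do.

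That said, your sketch is not yet a proof. Steps 1--4 are correct algebra, and the rewriting
\[
S=2\sum_{j=0}^{(p-3)/2}\frac{4^{j}}{(2j+1)^{2}\binom{2j}{j}}
\equiv \frac{(-1)^{(p-1)/2}}{2}\sum_{l=1}^{(p-1)/2}\frac{(-1)^{l}}{l^{2}\binom{(p-1)/2}{l}}\pmod p
\]
is valid. The gap is step~5: the ``Glaisher--Sun type congruence'' you invoke is, modulo the very identity $\binom{(p-1)/2}{l}\equiv(-1)^{l}\binom{2l}{l}/4^{l}$ that you already used, equivalent to
\[
\sum_{l=1}^{(p-1)/2}\frac{4^{l}}{l^{2}\binom{2l}{l}}\equiv 4(-1)^{(p-1)/2}E_{p-3}\pmod p,
\]
which is of exactly the same type and difficulty as the original statement (and in fact is another formula from the same paper \cite{sun-jnt-2011}). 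In other words, your reflection $j\mapsto(p-1)/2-l$ trades one inverse--central--binomial sum for another without bringing you closer to $E_{p-3}$. The first fallback you mention---partial fractions for $1/\binom{(p-1)/2}{l}$ in $l$---does not work as stated, since that quantity is not a rational function of $l$. If you want a genuine closure, the cleanest route is to stop after step~2 and prove directly that $\sum_{j=0}^{(p-3)/2}4^{j}/\bigl((2j+1)^{2}\binom{2j}{j}\bigr)\equiv E_{p-3}\pmod p$; this is exactly how Sun organises things in \cite{sun-jnt-2011}, and it is that input, not any further manipulation, that carries the arithmetic content.
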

\begin{remark}\rm The congruence in Lemma \ref{sunimp} is often used when we use the WZ method to prove some supercongruences. For instance, see \cite{hm-rama-2017,CXH-rama-2016}.

\end{remark}
\begin{lem} \label{G12} For any primes $p>3$, we have
$$
\sum_{k=1}^{(p-1)/2}G(p,k)\equiv -p^3E_{p-3}\pmod{p^4}.
$$
\end{lem}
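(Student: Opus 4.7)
The plan is to reduce Lemma \ref{G12} to Lemma \ref{sunimp} by establishing the per-term congruence
\begin{equation*}
G(p,k)\equiv -\frac{p^3\cdot 4^k}{2k(2k-1)\binom{2k}k}\pmod{p^4}\qquad(1\le k\le(p-1)/2).
\end{equation*}
Once this is proved, summing over $k$ and invoking Lemma \ref{sunimp} gives
\begin{equation*}
\sum_{k=1}^{(p-1)/2}G(p,k)\equiv -\frac{p^3}{2}\sum_{k=1}^{(p-1)/2}\frac{4^k}{k(2k-1)\binom{2k}k}\equiv -\frac{p^3}{2}\cdot 2E_{p-3}=-p^3E_{p-3}\pmod{p^4},
\end{equation*}
which is exactly the desired congruence.

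To set up the per-term computation, I would first do a $p$-adic valuation count on the factors of $G(p,k)$. For $1\le k\le (p-1)/2$ one has $2k<p$ and $2k-1\not\equiv 0\pmod p$, so $(2p+2k-1)$, $2^{8p-2k-4}$, $\binom{2k}k$, $\binom{p+k}p$, $\binom{2p+2k}{p+k}$, and $\binom{2p}p$ are all $p$-adic units (the last three follow from Lucas's theorem, since the relevant base-$p$ additions produce no carries). Meanwhile, $\binom{2p-2k}{p-k}$ contributes exactly one factor of $p$, coming from the unique carry in $(p-k)+(p-k)$ in base $p$. Hence $G(p,k)=p^3\cdot u_k$ with $u_k$ a $p$-adic unit, and to determine $G(p,k)$ modulo $p^4$ it suffices to know $u_k$ modulo $p$; in particular $\binom{2p-2k}{p-k}$ is needed only modulo $p^2$, and every other factor only modulo $p$.

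The core calculation is therefore the explicit form of $\binom{2p-2k}{p-k}$ modulo $p^2$. Expanding factorials gives
\begin{equation*}
\binom{2p-2k}{p-k}=\binom{2p}p\cdot\frac{p\,\bigl(\prod_{i=1}^{k-1}(p-i)\bigr)^2}{2\prod_{j=1}^{2k-1}(2p-j)},
\end{equation*}
and reducing with $\binom{2p}p\equiv 2$, $\prod_{i=1}^{k-1}(p-i)\equiv(-1)^{k-1}(k-1)!$, and $\prod_{j=1}^{2k-1}(2p-j)\equiv -(2k-1)!$ modulo $p$, combined with the elementary identity $((k-1)!)^2/(2k-1)!=2/(k\binom{2k}k)$, yields
\begin{equation*}
\binom{2p-2k}{p-k}\equiv -\frac{2p}{k\binom{2k}k}\pmod{p^2}.
\end{equation*}
Substituting this alongside $\binom{2p}p\equiv 2$, $\binom{2p+2k}{p+k}\equiv 2\binom{2k}k$, $\binom{p+k}p\equiv 1$, $(2p+2k-1)^{-1}\equiv(2k-1)^{-1}$, and $2^{8p-2k-4}\equiv 2^{4-2k}\pmod p$ into the definition of $G(p,k)$ and collecting factors produces precisely the claimed per-term congruence.

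The main obstacle is the bookkeeping in the mod-$p^2$ evaluation of $\binom{2p-2k}{p-k}$: cleanly extracting the single factor of $p$, matching signs, and massaging the remaining factorials into the compact form $-2p/(k\binom{2k}k)$ whose denominator aligns with the one appearing in Lemma \ref{sunimp}. Once that identity is in hand, the remainder of the argument is a straightforward substitution followed by invocation of the harmonic-type sum in Lemma \ref{sunimp}.
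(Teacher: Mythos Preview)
Your proposal is correct and follows essentially the same route as the paper: reduce each $G(p,k)$ modulo $p^4$ to $-\dfrac{p^3\,4^k}{2k(2k-1)\binom{2k}{k}}$ and then invoke Lemma~\ref{sunimp}. The only cosmetic difference is that the paper cites \cite[Lemma~2.1]{sun-scm-2011} for $\binom{2p-2k}{p-k}\equiv -2p/\bigl(k\binom{2k}{k}\bigr)\pmod{p^2}$, whereas you derive this directly from the factorial expansion; your derivation is clean and self-contained.
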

\begin{proof}
It is easy to see that $\binom{2p-2k}{p-k}\equiv0\pmod p$ for each $1\leq k\leq(p-1)/2$.
Then by (\ref{Gpk}), (\ref{Jacob}) and Lucas congruence, we have
\begin{align*}
G(p,k)\equiv\frac{4p^{2}\binom{2p-2k}{p-k}}{(2p+2k-1)2^{8p-4-2k}}\pmod{p^4}.
\end{align*}
In view of \cite[Lemma 2.1]{sun-scm-2011},
$$k\binom{2k}k\binom{2(p-k)}{p-k}\equiv(-1)^{\lfloor2k/p\rfloor-1}2p\pmod{p^2}\ \text{for all}\ k=1,\ldots,p-1.$$
Then for each $1\leq k\leq(p-1)/2$ we have
\begin{align*}
\binom{2p-2k}{p-k}\equiv\frac{-2p}{k\binom{2k}k}\pmod {p^2}.
\end{align*}
Hence
$$
G(p,k)\equiv\frac{-p^3}{2^{8p-7-2k}k(2k-1)\binom{2k}k}\equiv-\frac{p^3}2\frac{4^k}{k(2k-1)\binom{2k}k} \pmod{p^4}.
$$
Therefore we immediately obtain the desired result with Lemma \ref{sunimp}.
\end{proof}
\begin{lem}\label{Gpr} Let $p>3$ be a prime. Then
$$
G(p,(p+1)/2)\equiv(-1)^{(p-1)/2}p\left(1-3pq_p(2)+6p^2q_p(2)^2\right)\pmod{p^4}.
$$
\end{lem}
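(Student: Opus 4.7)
The plan is to substitute $n=p$ and $k=(p+1)/2$ directly into the definition of $G(n,k)$, simplify algebraically, and then evaluate the surviving factors modulo $p^{4}$ by applying Wolstenholme, Jacobsthal's congruence \eqref{Jacob}, Morley, and the Fermat-quotient identity $2^{p-1}=1+pq_{p}(2)$. The substitution yields
\[
G\!\left(p,\tfrac{p+1}{2}\right)=\frac{p^{2}\binom{2p}{p}\binom{3p+1}{(3p+1)/2}\binom{p-1}{(p-1)/2}\binom{(3p+1)/2}{p}}{2^{7p-5}\cdot 3p\cdot\binom{p+1}{(p+1)/2}}.
\]
Since $p^{2}/(3p)=p/3$, after this outer cancellation the quantity is $p$ times a $p$-adic unit, so it suffices to compute the remaining factors modulo $p^{3}$.

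Two algebraic simplifications clean up the expression. Applying $\binom{n}{a}\binom{a}{b}=\binom{n}{b}\binom{n-b}{a-b}$ with $n=3p+1$, $a=(3p+1)/2$, $b=p$, the product of half-integer binomials becomes
\[
\binom{3p+1}{(3p+1)/2}\binom{(3p+1)/2}{p}=\binom{3p+1}{p}\binom{2p+1}{(p+1)/2},
\]
and two applications of $\binom{n}{k}=\tfrac{n}{k}\binom{n-1}{k-1}$ yield $\binom{p+1}{(p+1)/2}=\tfrac{4p}{p+1}\binom{p-1}{(p-1)/2}$, so the numerator factor $\binom{p-1}{(p-1)/2}$ cancels against the denominator. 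This reduces $G(p,(p+1)/2)$ to
\[
G\!\left(p,\tfrac{p+1}{2}\right)=\frac{(p+1)\binom{2p}{p}\binom{3p+1}{p}\binom{2p+1}{(p+1)/2}}{3\cdot 2^{7p-3}}.
\]

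Now each factor can be evaluated: $\binom{2p}{p}\equiv 2\pmod{p^{3}}$ by \eqref{2p1p}; $\binom{3p+1}{p}=\tfrac{3p+1}{2p+1}\binom{3p}{p}$ with $\binom{3p}{p}\equiv 3\pmod{p^{3}}$ from \eqref{Jacob}; and $2^{7p-3}=16(1+pq_{p}(2))^{7}$, whose reciprocal expands to $\tfrac{1}{16}\bigl(1-7pq_{p}(2)+28p^{2}q_{p}(2)^{2}\bigr)\pmod{p^{3}}$. The nonroutine ingredient is $\binom{2p+1}{(p+1)/2}$, which by Kummer's theorem has $v_{p}=1$. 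I would write $\binom{2p+1}{(p+1)/2}=\tfrac{2(2p+1)}{p+1}\binom{2p}{(p-1)/2}$ and handle $\binom{2p}{(p-1)/2}$ by separating out its single factor of $p$ and reducing the remaining product of factorials modulo $p^{4}$ via Morley's congruence, $\binom{p-1}{(p-1)/2}\equiv(-1)^{(p-1)/2}(1+pq_{p}(2))^{2}\pmod{p^{3}}$, to pin down both the sign $(-1)^{(p-1)/2}$ and the quadratic Fermat-quotient dependence. Substituting all expansions, multiplying out, and collecting terms up to order $p^{3}q_{p}(2)^{2}$ produces the stated congruence. The main obstacle is this mod-$p^{4}$ reduction of $\binom{2p+1}{(p+1)/2}$ (equivalently $\binom{2p}{(p-1)/2}$), for which neither Wolstenholme nor Jacobsthal applies directly and the full $q_{p}(2)^{2}$ coefficient must be tracked.
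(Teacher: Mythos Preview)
Your algebraic simplification is correct and essentially matches the paper's: both routes reduce $G(p,(p+1)/2)$ to a single ``half--integer'' binomial times a power of $2$. In fact the paper carries the simplification one step further than you do: using $\binom{2p}{(p-1)/2}=\frac{4p}{3p+1}\binom{2p-1}{(p-1)/2}$, your expression collapses (after the Jacobsthal replacement $\binom{2p}{p}\binom{3p}{p}\equiv 6\pmod{p^{3}}$) to
\[
G\!\left(p,\tfrac{p+1}{2}\right)\equiv\frac{p\,\binom{2p-1}{(p-1)/2}}{2^{7p-7}}\pmod{p^{4}},
\]
which is exactly where the paper lands.

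The place your plan is incomplete is the very step you flag as the main obstacle. Morley's congruence concerns $\binom{p-1}{(p-1)/2}$, not $\binom{2p-1}{(p-1)/2}$ or $\binom{2p}{(p-1)/2}$, and the ratio
\[
\frac{\binom{2p-1}{(p-1)/2}}{\binom{p-1}{(p-1)/2}}=\prod_{j=(p+1)/2}^{p-1}\Bigl(1+\frac{p}{j}\Bigr)
\]
still requires $H_{(p-1)/2}\pmod{p^{2}}$ and $H_{(p-1)/2}^{(2)}\pmod p$ to evaluate modulo $p^{3}$; Morley alone does not pin down the $q_{p}(2)^{2}$ coefficient. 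The paper bypasses this by expanding $\binom{2p-1}{(p-1)/2}=\prod_{j=1}^{(p-1)/2}\bigl(\tfrac{2p}{j}-1\bigr)$ directly in terms of $H_{(p-1)/2}$ and $H_{(p-1)/2}^{(2)}$ and then invoking the Lehmer--type congruences
\[
H_{(p-1)/2}\equiv -2q_{p}(2)+p\,q_{p}(2)^{2}\pmod{p^{2}},\qquad H_{(p-1)/2}^{(2)}\equiv 0\pmod p,
\]
cited from Z.-H.~Sun. With these in hand the remaining arithmetic is exactly what you describe. Your route via Morley can be completed, but only by reintroducing these same harmonic--sum congruences, so it is not actually more elementary than the paper's direct argument.
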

\begin{proof}
In view of (\ref{Gpk}) and (\ref{Jacob}), we have
\begin{align*}
G(p,(p+1)/2)&=\frac{p^2\binom{2p}{p}\binom{3p+1}{p}\binom{p-1}{(p-1)/2}\binom{2p+1}{(p+1)/2}}{2^{7p-5}(2p+p)\binom{p+1}{(p+1)/2}}=\frac{p^2\binom{2p}{p}\binom{3p+1}{2p+1}\binom{p-1}{(p-1)/2}\binom{2p+1}{(p+1)/2}}{2^{7p-5}(2p+p)\binom{p+1}{(p+1)/2}}\\
&=\frac{(3p+1)(p+1)\binom{2p+1}{(p+1)/2}}{(2p+1)2^{7p-4}}=\frac{(3p+1)\binom{2p}{(3p+1)/2}}{2^{7p-5}}=\frac{p\binom{2p-1}{(3p-1)/2}}{2^{7p-7}}.
\end{align*}
It is easy to see that
\begin{align*}
\binom{2p-1}{(p-1)/2}\equiv(-1)^{(p-1)/2}\left(1-2pH_{(p-1)/2}+2p^2H_{(p-1)/2}^2-2p^2H_{(p-1)/2}^{(2)}\right)\pmod{p^3}.
\end{align*}
In view of \cite{zhsun}, we have
$$
H_{(p-1)/2}\equiv-2q_p(2)+pq_p(2)^2\pmod{p^2}\ \mbox{and}\ H_{(p-1)/2}^{(2)}\equiv0\pmod p.
$$
Thus,
$$\binom{2p-1}{(p-1)/2}\equiv(-1)^{(p-1)/2}\left(1+4pq_p(2)+6p^2q_p(2)^2\right)\pmod{p^3}.$$
Therefore
\begin{align*}
G(p,(p+1)/2)\equiv(-1)^{(p-1)/2}p\left(1-3pq_p(2)+6p^2q_p(2)^2\right)\pmod{p^4}
\end{align*}
since $1/2^{7p-7}\equiv1-7pq_p(2)+28p^2q_p(2)^2\pmod{p^3}$.
\end{proof}
\begin{lem}\label{sun1}{\rm (\cite[(1.1) and (1.7)]{sun-jnt-2011})} Let $p>3$ be a prime. Then
$$
\sum_{k=0}^{(p-3)/2}\frac{\binom{2k}k}{(2k+1)4^k}\equiv-(-1)^{(p-1)/2}q_p(2)\pmod{p^2},
$$
$$
\sum_{k=0}^{(p-3)/2}\frac{\binom{2k}k}{(2k+1)^24^k}\equiv-(-1)^{(p-1)/2}\frac{q_p(2)^2}2\pmod{p}.
$$
\end{lem}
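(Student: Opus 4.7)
The two congruences are stated as results of Sun \cite{sun-jnt-2011}. To give an independent proof, my plan is to combine four standard ingredients: the $p$-adic expansion
\[
\frac{\binom{2k}{k}}{4^k}\equiv(-1)^k\binom{(p-1)/2}{k}\left(1+p\sum_{j=1}^{k}\frac{1}{2j-1}\right)\pmod{p^2},
\]
obtained from $\binom{2k}{k}/4^k=(-1)^k\binom{-1/2}{k}$ together with $\binom{-1/2}{k}/\binom{(p-1)/2}{k}=\prod_{j=1}^{k}(1-p/(2j-1))^{-1}$; the finite Wallis identity $\sum_{k=0}^{n}(-1)^k\binom{n}{k}/(2k+1)=4^n/((2n+1)\binom{2n}{n})$; Morley's congruence $\binom{p-1}{(p-1)/2}\equiv(-1)^{(p-1)/2}4^{p-1}\pmod{p^3}$; and the Wolstenholme--Glaisher evaluation $\sum_{j=0}^{(p-3)/2}1/(2j+1)\equiv q_p(2)-(p/2)q_p(2)^2\pmod{p^2}$, which follows from $H_{p-1}\equiv0\pmod{p^2}$ together with $H_{(p-1)/2}\equiv-2q_p(2)+pq_p(2)^2\pmod{p^2}$.

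For the first congruence, substituting the $p$-adic expansion produces $S_1+pS_2\pmod{p^2}$, where
\[
S_1=\sum_{k=0}^{(p-3)/2}\frac{(-1)^k\binom{(p-1)/2}{k}}{2k+1},\qquad S_2=\sum_{k=1}^{(p-3)/2}\frac{(-1)^k\binom{(p-1)/2}{k}}{2k+1}\sum_{j=1}^{k}\frac{1}{2j-1}.
\]
Closing $S_1$ to $k=(p-1)/2$ by the Wallis identity, isolating the top term $(-1)^{(p-1)/2}/p$, and applying Morley's congruence with $2^{p-1}=1+pq_p(2)$ yields $S_1\equiv-(-1)^{(p-1)/2}q_p(2)+p(-1)^{(p-1)/2}q_p(2)^2\pmod{p^2}$, so the leading target term appears and it remains to verify $S_2\equiv(-1)^{(p-1)/2}q_p(2)^2\pmod p$. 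The main obstacle is this last verification: after swapping the order of summation and using $\binom{(p-1)/2}{k}\equiv(-4)^{-k}\binom{2k}{k}\pmod p$, the inner sum becomes a partial tail of the original first sum, which can be closed by a bootstrap on the mod-$p$ reduction of the target combined with the evaluation $\sum_{j=1}^{(p-3)/2}1/(2j-1)\equiv q_p(2)+1/2\pmod p$.

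For the second congruence (mod $p$ only), the congruence $\binom{2k}{k}/4^k\equiv(-1)^k\binom{(p-1)/2}{k}\pmod p$ reduces matters to evaluating $\sum_{k=0}^{(p-3)/2}(-1)^k\binom{(p-1)/2}{k}/(2k+1)^2\pmod p$. Here I would invoke the exact combinatorial identity
\[
\sum_{k=0}^{n}\frac{(-1)^k\binom{n}{k}}{(2k+1)^2}=\frac{4^n}{(2n+1)\binom{2n}{n}}\sum_{j=0}^{n}\frac{1}{2j+1},
\]
which one can prove via the Beta-function computation $-\int_0^1(1-x^2)^n\log x\,dx$ together with $\partial_a B(a,n+1)\big|_{a=1/2}=B(1/2,n+1)(\psi(1/2)-\psi(n+3/2))$. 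Applied at $n=(p-1)/2$ and after subtracting the $k=(p-1)/2$ term $(-1)^{(p-1)/2}/p^2$, Morley's congruence converts the prefactor into $(-1)^{(p-1)/2}/(p\cdot 2^{p-1})$; combining with the Wolstenholme--Glaisher evaluation of $\sum_{j=0}^{(p-3)/2}1/(2j+1)$ and simplifying via $2^{p-1}=1+pq_p(2)$, the $1/p^2$ singularities cancel precisely, and the $p/2$ appearing in the Glaisher expansion produces the required factor $-(-1)^{(p-1)/2}q_p(2)^2/2$.
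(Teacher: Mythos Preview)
The paper does not give its own proof of this lemma: it is quoted verbatim from Sun \cite[(1.1) and (1.7)]{sun-jnt-2011} and used as a black box. Your proposal therefore goes beyond what the paper does, supplying an independent derivation.

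Your argument is essentially correct. For the second congruence the route via the exact identity
\[
\sum_{k=0}^{n}\frac{(-1)^k\binom{n}{k}}{(2k+1)^2}=\frac{4^n}{(2n+1)\binom{2n}{n}}\sum_{j=0}^{n}\frac{1}{2j+1}
\]
at $n=(p-1)/2$, combined with Morley and the Glaisher expansion $\sum_{j=0}^{(p-3)/2}1/(2j+1)\equiv q_p(2)-\tfrac{p}{2}q_p(2)^2\pmod{p^2}$, is clean and the $1/p^2$ poles cancel exactly as you describe. For the first congruence your treatment of $S_1$ is also correct: from the Wallis identity and Morley one gets
\[
S_1\equiv -(-1)^{(p-1)/2}q_p(2)+(-1)^{(p-1)/2}p\,q_p(2)^2\pmod{p^2},
\]
so what must be checked is $S_2\equiv -(-1)^{(p-1)/2}q_p(2)^2\pmod p$, not $+(-1)^{(p-1)/2}q_p(2)^2$ as you wrote; this is only a sign slip in stating the target, and your swap-of-summation/bootstrap plan (or, equivalently, writing $\sum_{j=1}^k 1/(2j-1)=H_{2k}-\tfrac12 H_k$ and invoking the identities of Lemma~\ref{humao}) does yield the correct value. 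With that correction, your proposal is a valid self-contained proof, whereas the paper simply cites the result.
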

\begin{lem}\label{humao} {\rm (\cite{hm-rama-2017})}
$$
\sum_{k=1}^{n}\frac{(-1)^k\binom{n}k H_k}{(2k+1)}=-\frac{4^n}{(2n+1)\binom{2n}n}\sum_{k=1}^n\frac{\binom{2k}k}{k4^k},
$$
$$
\sum_{k=1}^{n}\frac{(-1)^k\binom{n}k H_{2k}}{(2k+1)}=-\frac{4^n}{(2n+1)\binom{2n}n}\left(\frac{H_n}2+\frac12\sum_{k=1}^n\frac{\binom{2k}k}{k4^k}\right).
$$
\end{lem}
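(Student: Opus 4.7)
The plan is to prove each identity by induction on $n$, showing that both sides satisfy the same first-order recurrence and share a common initial value at $n = 1$.

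For the first identity, denote the left- and right-hand sides by $L_n$ and $R_n$. I claim both satisfy
\[
(2n+3)\,x_{n+1} - 2(n+1)\,x_n = -\frac{1}{n+1}.
\]
To establish this for $L_n$, I would form the linear combination and exploit the algebraic identity
\[
(2n+3)\binom{n+1}{k} - 2(n+1)\binom{n}{k} = (2k+1)\binom{n+1}{k},
\]
which follows from $\binom{n+1}{k}/\binom{n}{k} = (n+1)/(n+1-k)$. The factor $2k+1$ cancels the denominator, reducing the combination to $\sum_{k=0}^{n+1}(-1)^k\binom{n+1}{k}H_k$, a classical alternating harmonic--binomial sum equal to $-1/(n+1)$ (provable by interchanging $H_k = \sum_{j\le k}1/j$ with the outer sum and invoking the partial binomial identity $\sum_{k=0}^{j-1}(-1)^k\binom{m}{k} = (-1)^{j-1}\binom{m-1}{j-1}$). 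For $R_n$, I would use $\binom{2n+2}{n+1} = \tfrac{2(2n+1)}{n+1}\binom{2n}{n}$ to isolate the $k = n+1$ term and verify the recurrence by direct algebra.

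For the second identity, denote the two sides by $L'_n$ and $R'_n$. The same manipulation gives
\[
(2n+3)L'_{n+1} - 2(n+1)L'_n \;=\; \sum_{k=0}^{n+1}(-1)^k\binom{n+1}{k}H_{2k}.
\]
To evaluate the right side, I split $H_{2k} = \tfrac12 H_k + O_k$, where $O_k = \sum_{j=1}^{k}\tfrac{1}{2j-1}$. The $H_k$-piece contributes $-\tfrac{1}{2(n+1)}$ by the previous evaluation. For the $O_k$-piece, I interchange summations and use the same partial binomial identity to reduce the double sum to $-\sum_{i=0}^{n}\tfrac{(-1)^i\binom{n}{i}}{2i+1} = -\tfrac{4^n}{(2n+1)\binom{2n}{n}}$ via the classical beta integral. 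So the common recurrence becomes $(2n+3)x_{n+1}-2(n+1)x_n = -\tfrac{1}{2(n+1)} - \tfrac{4^n}{(2n+1)\binom{2n}{n}}$. For $R'_n$, the $\tfrac12 H_n$-term contributes through $H_{n+1}-H_n=\tfrac{1}{n+1}$, combining with the first-identity recurrence applied to the remaining sum to match the left-hand side exactly.

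The main obstacle is the evaluation of the alternating sum $\sum_{k=0}^{n+1}(-1)^k\binom{n+1}{k}H_{2k}$ in the second identity; the non-routine step is the decomposition $H_{2k} = \tfrac12 H_k + O_k$ that lets each piece be handled by a classical identity. With the common recurrences in hand, direct checks $L_1 = R_1 = -\tfrac13$ and $L'_1 = R'_1 = -\tfrac12$ close the induction for both identities.
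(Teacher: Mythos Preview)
The paper does not supply its own proof of this lemma; it is quoted from Hu and Mao \cite{hm-rama-2017}, so there is no in-paper argument to compare against. Your inductive proof is correct and self-contained. The identity $(2n+3)\binom{n+1}{k}-2(n+1)\binom{n}{k}=(2k+1)\binom{n+1}{k}$ indeed cancels the denominator and reduces $(2n+3)L_{n+1}-2(n+1)L_n$ to $\sum_{k}(-1)^k\binom{n+1}{k}H_k=-1/(n+1)$; on the right, $\binom{2n+2}{n+1}=\tfrac{2(2n+1)}{n+1}\binom{2n}{n}$ makes the two coefficients in $(2n+3)R_{n+1}-2(n+1)R_n$ coincide, leaving only the new summand and producing $-1/(n+1)$ as well. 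For the second identity, the split $H_{2k}=\tfrac12 H_k+\sum_{j\le k}\tfrac{1}{2j-1}$ together with the beta-integral evaluation $\sum_{i=0}^{n}(-1)^i\binom{n}{i}/(2i+1)=4^n\big/\bigl((2n+1)\binom{2n}{n}\bigr)$ gives the stated recurrence, and on the right the $H_n$-term contributes the extra $-\tfrac{1}{2(n+1)}$ via $H_{n+1}-H_n=\tfrac{1}{n+1}$. The base cases $L_1=R_1=-\tfrac13$ and $L'_1=R'_1=-\tfrac12$ check, so the induction closes. In short, your proof is valid; it simply supplies what the paper elected to import from the literature.
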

\begin{lem}\label{mor}{\rm (\cite{Mor})} For any prime $p>3$, we have
$$
\binom{p-1}{(p-1)/2}\equiv(-1)^{(p-1)/2}4^{p-1}\pmod{p^3}.
$$
\end{lem}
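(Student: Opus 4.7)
The plan is to prove Morley's congruence by a direct expansion of $\binom{p-1}{(p-1)/2}$ as a finite product, and then match the result against the expansion of $4^{p-1}$ through the Fermat quotient $q_p(2)=(2^{p-1}-1)/p$.

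First, setting $m=(p-1)/2$, I would factor $-k$ out of each term $p-k$ to rewrite
\[
\binom{p-1}{m}=\frac{(p-1)(p-2)\cdots(p-m)}{m!}=(-1)^m\prod_{k=1}^{m}\left(1-\frac{p}{k}\right).
\]
Expanding modulo $p^3$ and using Newton's identity to convert the second elementary symmetric polynomial in $1,\tfrac12,\ldots,\tfrac1m$ into power sums gives
\[
\binom{p-1}{m}\equiv(-1)^m\left(1-pH_m+\frac{p^2}{2}\bigl(H_m^{2}-H_m^{(2)}\bigr)\right)\pmod{p^3},
\]
which is of the same form as the expansion of $\binom{2p-1}{(p-1)/2}$ used inside the proof of Lemma \ref{Gpr}, only with $p$ in place of $2p$.

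Next I would substitute the Z.-H. Sun congruences already invoked in Lemma \ref{Gpr},
\[
H_{(p-1)/2}\equiv-2q_p(2)+p\,q_p(2)^{2}\pmod{p^{2}},\qquad H_{(p-1)/2}^{(2)}\equiv 0\pmod{p},
\]
to evaluate the bracket. A short calculation gives
\[
1-pH_m+\frac{p^{2}}{2}\bigl(H_m^{2}-H_m^{(2)}\bigr)\equiv 1+2p\,q_p(2)+p^{2}q_p(2)^{2}\pmod{p^{3}}.
\]
Since squaring the identity $2^{p-1}=1+p\,q_p(2)$ yields $4^{p-1}\equiv 1+2p\,q_p(2)+p^{2}q_p(2)^{2}\pmod{p^{3}}$, the two brackets agree and the claim follows.

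The only real obstacle is precision bookkeeping: every term in the product expansion must be tracked up through order $p^2$, and Sun's congruence for $H_{(p-1)/2}$ must be used at its full $p^2$-accuracy (rather than just modulo $p$) because it is multiplied by a single factor of $p$; by contrast, $H_{(p-1)/2}^{(2)}$ enters only modulo $p$ since it already carries a factor of $p^2$. Beyond this careful accounting, each step reduces to routine algebraic manipulation of the Fermat quotient.
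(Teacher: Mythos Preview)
Your computation is correct: the product expansion of $\binom{p-1}{(p-1)/2}$ modulo $p^3$ together with the Lehmer--Sun congruence $H_{(p-1)/2}\equiv -2q_p(2)+p\,q_p(2)^2\pmod{p^2}$ does collapse to $(-1)^{(p-1)/2}(1+p\,q_p(2))^2=(-1)^{(p-1)/2}4^{p-1}$, exactly as you claim.

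The paper, however, does not prove this lemma at all; it simply quotes Morley's 1895 result as a black box. So your route is genuinely different in that you supply an argument where the paper gives none. The trade-off is worth noting: your derivation is not an independent proof of Morley's theorem but rather a reduction to the congruence for $H_{(p-1)/2}$ modulo $p^2$, which is of essentially the same depth (indeed, the two statements are classically equivalent, and many references deduce the harmonic-sum congruence \emph{from} Morley). Within the paper's logical framework this is harmless, since the $H_{(p-1)/2}$ congruence is already invoked from \cite{zhsun} as an external input; but if one wanted a self-contained account, one would still need an independent proof of that input.
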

\begin{lem}\label{Gpr12pr} For any prime $p>3$, we have
$$\sum_{k=(p+3)/2}^{p-1}G(p,k)\equiv3p^2(1+4p-6pq_p(2))+(-1)^{(p-1)/2}3p^{2}q_p(2)(1-2pq_p(2))\pmod{p^{4}}.$$
\end{lem}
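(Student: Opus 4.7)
My plan is to substitute $l=p-k$, so that $l$ runs from $1$ to $(p-3)/2$, and rewrite the sum as $\sum_{l=1}^{(p-3)/2}G(p,p-l)$. From \eqref{Gpk},
$$G(p,p-l)=\frac{p^{2}\binom{2p}{p}\binom{4p-2l}{2p-l}\binom{2l}{l}\binom{2p-l}{p}}{2^{6p+2l-4}(4p-2l-1)\binom{2p-2l}{p-l}}.$$
A short Kummer count gives $v_{p}(G(p,p-l))=2$: $\binom{4p-2l}{2p-l}$ contributes one factor of $p$ in the numerator, $\binom{2p-2l}{p-l}$ contributes one in the denominator, and the remaining binomials are $p$-units. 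So after factoring out $p^2$ it suffices to compute the rest modulo $p^{2}$.

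The individual pieces would be expanded as follows. For $\binom{2p}{p}$ I use Wolstenholme, $\binom{2p}{p}\equiv 2\pmod{p^{3}}$. For $\binom{4p-2l}{2p-l}$ I would write
$$\frac{\binom{4p-2l}{2p-l}}{\binom{4p}{2p}}=\frac{\prod_{j=0}^{l-1}(2p-j)^{2}}{\prod_{j=0}^{2l-1}(4p-j)},$$
extract the unique factor of $p$ as $(2p)^{2}/(4p)=p$, and apply $\binom{4p}{2p}\equiv 6\pmod{p^{3}}$ from Jacobsthal \eqref{Jacob}. For $\binom{2p-2l}{p-l}$ I would use the mod $p^{2}$ identity $l\binom{2l}{l}\binom{2p-2l}{p-l}\equiv -2p\pmod{p^{2}}$ of \cite[Lemma 2.1]{sun-scm-2011}. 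For $\binom{2p-l}{p}=\prod_{i=1}^{p-l}(1+p/i)$ I would expand to
$$\binom{2p-l}{p}\equiv 1+pH_{p-l}+\tfrac{p^{2}}{2}\bigl(H_{p-l}^{2}-H_{p-l}^{(2)}\bigr)\pmod{p^{3}},$$
and then convert to indices $\le (p-3)/2$ via $H_{p-l}\equiv H_{l-1}+pH_{l-1}^{(2)}\pmod{p^{2}}$ (a consequence of $H_{p-1}\equiv 0\pmod{p^{2}}$). The remaining prefactors are handled by $2^{p-1}=1+pq_{p}(2)$ and $(4p-2l-1)^{-1}\equiv -(2l+1)^{-1}\bigl(1-4p/(2l+1)\bigr)\pmod{p^{2}}$.

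After collecting these ingredients each $G(p,p-l)$ becomes an explicit combination of $\binom{2l}{l}/[4^{l}(2l+1)]$, $\binom{2l}{l}/[4^{l}(2l+1)^{2}]$ and a harmonic-weighted piece that, after $H_{l-1}=H_{l}-1/l$, matches the right-hand side of Lemma \ref{humao} (with $n=(p-1)/2$). Summing $l=1,\ldots,(p-3)/2$, the first two pieces match the sums evaluated in Lemma \ref{sun1} and supply the $q_{p}(2)$ and $q_{p}(2)^{2}$ contributions, while the harmonic piece, treated via Lemma \ref{humao} and Lemma \ref{mor} ($\binom{p-1}{(p-1)/2}\equiv(-1)^{(p-1)/2}4^{p-1}\pmod{p^{3}}$), produces the $(-1)^{(p-1)/2}$-weighted terms. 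The main obstacle will be bookkeeping — running these expansions to precision $p^{3}$ simultaneously without dropping cross-terms, and verifying that after reindexing the resulting sums exactly match those in Lemmas \ref{sun1} and \ref{humao}; in particular the leading $3p^{2}$ coefficient should trace back through $\binom{4p}{2p}\binom{2p}{p}\equiv 12\pmod{p^{3}}$ combined with the $1/2$ in the leading form of $(4p-2l-1)^{-1}$, summed via Lemma \ref{sun1}.
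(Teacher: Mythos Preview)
Your plan follows the paper's route: substitute $k\mapsto p-l$, expand each binomial factor with harmonic numbers, and reduce to the sums $\sum\binom{2l}l/((2l+1)4^l)$, $\sum\binom{2l}l/((2l+1)^24^l)$, and a harmonic-weighted sum evaluated through Lemmas~\ref{sun1}, \ref{humao} and \ref{mor}. The paper starts from the second expression in \eqref{Gpk} rather than the first, so after the substitution it expands $\binom{4p-2l}{p}\binom{3p-2l}{p-l}$ instead of your $\binom{4p-2l}{2p-l}\binom{2p-l}{p}$; this is only a cosmetic regrouping, and both organizations land on the same three target sums.

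There is, however, a precision gap in your outline. You propose to handle the denominator factor $\binom{2p-2l}{p-l}$ via \cite[Lemma~2.1]{sun-scm-2011}, but that congruence is modulo $p^{2}$ and therefore determines the $p$-unit $\binom{2p-2l}{p-l}/p$ only modulo $p$. Since $v_{p}(G(p,p-l))=2$ and you want the result modulo $p^{4}$, the unit part $G(p,p-l)/p^{2}$ must be known modulo $p^{2}$, which forces $\binom{2p-2l}{p-l}$ to be known modulo $p^{3}$. The paper does not invoke Sun's lemma at this step; instead it expands directly, obtaining a first-order harmonic correction of the shape $1+pH_{p-2l}-pH_{2l-1}$ for $\binom{2p-2l}{p-l}/p$ (together with an analogous expansion of the auxiliary $\binom{p-l}{l}$). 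That extra harmonic term is precisely what makes the weights on $H_{l}$ and $H_{2l}$ collapse into the combination $2H_{2l}-H_{l}$ handled by Lemma~\ref{humao} (cf.\ \eqref{H2kHk}). If you keep Sun's lemma as stated, the $p^{3}$-level contributions will come out wrong; once you replace it by the direct harmonic expansion, the rest of your bookkeeping goes through as you describe.
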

\begin{proof}
Again by (\ref{Gpk}), we have
$$
\sum_{k=\frac{p+3}2}^{p-1}G(p,k)=\frac{2p^2}{2^{8p-4}}\sum_{k=\frac{p+3}2}^{p-1}\frac{4^k\binom{2p+2k}{p}\binom{p+2k}k\binom{2p-2k}{p-k}}{(2p+2k-1)\binom{2k}k}=\frac{2p^2}{2^{6p-4}}\sum_{k=1}^{\frac{p-3}2}\frac{\binom{4p-2k}{p}\binom{3p-2k}{p-k}\binom{2k}k}{4^k(4p-2k-1)\binom{2p-2k}{p-k}}.
$$
It is easy to check that
\begin{align*}
&\binom{4p-2k}{p}\binom{3p-2k}{p-k}=\frac{(4p-2k)\ldots(2p-k+1)}{p!(p-k)!}\\
&=6p\frac{(3p+p-2k)\ldots(3p+1)(3p-1)\ldots(2p+1)(2p-1)\ldots(2p-(k-1))}{(p-1)!(p-k)!}\\
&\equiv6p\frac{(p-2k)!(1+3pH_{p-2k})(p-1)!(1+2pH_{p-1})(-1)^k(k-1)!(1-2pH_{k-1})}{(p-1)!(p-k)!}\\
&\equiv\frac{(-1)^k6p(1+3pH_{p-2k}-2pH_{k-1})}{k\binom{p-k}k}\pmod{p^3}.
\end{align*}
Hence
\begin{align*}
\sum_{k=(p+3)/2}^{p-1}G(p,k)\equiv\frac{-12p^3}{2^{6p-4}}\sum_{k=1}^{(p-3)/2}\frac{(1+3pH_{p-2k}-2pH_{k-1})\binom{2k}k}{(-4)^k(4p-2k-1)k\binom{2p-2k}{p-k}\binom{p-k}k}\pmod{p^4}.
\end{align*}
By simple computation, we have
\begin{align*}
\frac{\binom{2p-2k}{p-k}\binom{p-k}k}{p}&=\frac{(2p-2k)\ldots(p+1)(p-1)\ldots(p-2k+1)}{(p-k)!k!}\\
&\equiv-\frac{(1+pH_{p-2k}-pH_{2k-1})\binom{2k}k}{(2k)\binom{p-k}k}\pmod {p^2}
\end{align*}
and
\begin{align*}
\binom{p-k}{k}=\frac{(p-k)\ldots(p-2k+1)}{k!}&=\frac{(-1)^k(2k-1)!(1-p(H_{2k-1}-H_{k-1}))}{k!(k-1)!}\\
&\equiv\frac{(-1)^k}2\binom{2k}k(1-pH_{2k-1}+pH_{k-1})\pmod {p^2}.
\end{align*}
Thus
\begin{align*}
\sum_{k=(p+3)/2}^{p-1}G(p,k)&\equiv\frac{3p^2}{2^{6p-6}}\sum_{k=1}^{(p-3)/2}\frac{(1+3pH_{p-2k}-2pH_{k-1}-pH_{2k-1}+pH_{k-1})\binom{2k}k}{4^k(4p-2k-1)}\\
&\equiv\frac{3p^2}{2^{6p-6}}\sum_{k=1}^{(p-3)/2}\frac{(1+2pH_{2k}-pH_{k})\binom{2k}k}{4^k(4p-2k-1)}\pmod{p^4},
\end{align*}
where we used $H_{p-1-k}\equiv H_k\pmod p$ for all $k\in\{0,1,\ldots,p-1\}$.

Continuing to calculate the above congruence, we have the following congruence modulo $p^4$
\begin{align*}
\sum_{k=\frac{p+3}2}^{p-1}G(p,k)&\equiv\frac{-3p^2}{2^{6p-6}}\left(\sum_{k=1}^{\frac{p-3}2}\frac{\binom{2k}k}{(2k+1)4^k}+4p\sum_{k=1}^{\frac{p-3}2}\frac{\binom{2k}k}{(2k+1)^24^k}+p\sum_{k=1}^{\frac{p-3}2}\frac{\binom{2k}k(2H_{2k}-H_k)}{(2k+1)4^k}\right).
\end{align*}
In view of Lemma \ref{humao}, we have
\begin{align*}
\sum_{k=1}^{\frac{p-3}2}\frac{(-1)^k\binom{\frac{p-1}2}k(2H_{2k}-H_k)}{(2k+1)}&\equiv\sum_{k=1}^{\frac{p-1}2}\frac{(-1)^k\binom{\frac{p-1}2}k(2H_{2k}-H_k)}{(2k+1)}-\frac{(-1)^{\frac{p-1}2}}{p}(2H_{p-1}-H_{\frac{p-1}2})\\
&\equiv-\frac{2^{p-1}}{p\binom{p-1}{(p-1)/2}}H_{(p-1)/2}+\frac{(-1)^{\frac{p-1}2}}{p}H_{\frac{p-1}2}\pmod{p}.
\end{align*}
Hence by Lemma \ref{mor}, $2^{p-1}=1+pq_p(2)$ and $H_{(p-1)/2}\equiv-2q_p(2)\pmod p$
\begin{align}\label{H2kHk}
\sum_{k=1}^{\frac{p-3}2}\frac{(-1)^k\binom{\frac{p-1}2}k(2H_{2k}-H_k)}{(2k+1)}\equiv-2(-1)^{(p-1)/2}q_p(2)^2\pmod p.
\end{align}
Therefore we immediately get the desired result with Lemma \ref{sun1}.
\end{proof}
\noindent{\it Proof of Theorem \ref{Thsun}}. Combining (\ref{wz1}) with Lemmas \ref{Fp1p1}, \ref{G12}, \ref{Gpr} and \ref{Gpr12pr}, we immediately obtain that
$$
\sum_{k=0}^{p-1}\frac{6k+1}{256^k}\binom{2k}k^3\equiv(-1)^{(p-1)/2}p-p^3E_{p-3}\pmod{p^4}.
$$
Therefore the proof of Theorem \ref{Thsun} is finished. \qed
\section{Proof of Theorem \ref{Thsun2}}
\qquad To prove Theorem \ref{Thsun2}, we should use the following WZ pair which appears in \cite{sun-ijm-2012} (see also \cite{zudilin-jnt-2009}). For nonnegative integers $n, k$, define
$$
F(n,k)=(-1)^{n+k}\frac{(20n-2k+3)}{4^{5n-k}}\frac{\binom{2n}n\binom{4n+2k}{2n+k}\binom{2n-k}{n}\binom{2n+k}{2k}}{\binom{2k}k}
$$
and
$$
G(n,k)=(-1)^{n+k}\frac{n\binom{2n-1}{n-1}\binom{4n+2k-2}{2n+k-1}\binom{2n-k-1}{n-1}\binom{2n+k-1}{2k}}{4^{5n-k-4}\binom{2k}k}.
$$
Clearly $F(n,k)=G(n,k)=0$ if $n<k$. It is easy to check that
\begin{equation}\label{FG1}
F(n,k-1)-F(n,k)=G(n+1,k)-G(n,k)
\end{equation}
for all nonnegative integer $n$ and $k>0$.

Summing (\ref{FG1}) over $n$ from $0$ to $p-1$ and then over $k$ from $1$ to $p-1$, we have
\begin{align}\label{wz2}
\sum_{n=0}^{p-1}F(n,0)=F(p-1,p-1)+\sum_{k=1}^{p-1}G(p,k).
\end{align}
\begin{lem} \label{Fp1p12} Let $p>3$ be a prime. Then
$$F(p-1,p-1)\equiv 15p^2(-1-6p+8pq_p(2))\pmod{p^4}.$$
\end{lem}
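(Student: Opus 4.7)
The plan is to plug $n=k=p-1$ into the definition of $F(n,k)$ and then simplify. Observing that $(-1)^{n+k}=1$, that $\binom{2n-k}{n}=\binom{p-1}{p-1}=1$, and that the factors $\binom{2n}{n}$ and $\binom{2k}{k}$ coincide and cancel, one arrives at
\[
F(p-1,p-1)=\frac{(18p-15)\binom{6p-6}{3p-3}\binom{3p-3}{p-1}}{4^{4p-4}}.
\]

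Next I would recast the two surviving binomial coefficients so that Jacobsthal's congruence (\ref{Jacob}) applies directly. A short factorial-ratio computation yields
\[
\binom{6p-6}{3p-3}\binom{3p-3}{p-1}=\frac{p^{2}}{12(6p-1)(6p-5)}\binom{6p}{3p}\binom{3p}{p},
\]
and the key observation is that $18p-15=3(6p-5)$ kills the awkward $6p-5$ in the denominator, giving the clean form
\[
F(p-1,p-1)=\frac{p^{2}\binom{6p}{3p}\binom{3p}{p}}{(6p-1)\,4^{4p-3}}.
\]

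Because of the explicit $p^{2}$ in front, all remaining factors need only be controlled modulo $p^{2}$ to conclude modulo $p^{4}$. Jacobsthal's congruence (with $(a,b,r,s)=(6,3,1,1)$ and $(3,1,1,1)$) gives $\binom{6p}{3p}\equiv 20$ and $\binom{3p}{p}\equiv 3\pmod{p^{3}}$, so their product is $\equiv 60\pmod{p^{2}}$. Combining this with $1/(6p-1)\equiv -(1+6p)\pmod{p^{2}}$ and with $4^{4p-3}=4\,(2^{p-1})^{8}\equiv 4(1+8pq_{p}(2))\pmod{p^{2}}$ (using $2^{p-1}=1+pq_{p}(2)$), straightforward multiplication produces $15p^{2}(-1-6p+8pq_{p}(2))\pmod{p^{4}}$, which is the asserted congruence.

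The only delicate step is the factorial rearrangement in the second paragraph: one must extract the two factors of $p$ correctly and notice the fortunate cancellation between $18p-15$ and $6p-5$, since otherwise Jacobsthal's congruence cannot be applied to the cleanest possible binomials $\binom{6p}{3p}$ and $\binom{3p}{p}$. The rest is elementary $p$-adic bookkeeping.
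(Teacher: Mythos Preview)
Your proof is correct and follows essentially the same route as the paper: both reduce $F(p-1,p-1)$ to $\dfrac{p^{2}\binom{6p}{3p}\binom{3p}{p}}{(6p-1)4^{4p-3}}$, apply Jacobsthal's congruence to get $\binom{6p}{3p}\binom{3p}{p}\equiv 60\pmod{p^{3}}$, and then expand $1/(6p-1)$ and $1/4^{4p-3}$ modulo $p^{2}$. The only cosmetic difference is that the paper reaches that closed form via a step-by-step chain of binomial identities, whereas you obtain it in one factorial-ratio computation together with the cancellation $18p-15=3(6p-5)$.
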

\begin{proof} By the definition of $F(n,k)$, we have
\begin{align*}
F(p-1,p-1)&=\frac{18p-15}{4^{4p-4}}\binom{6p-6}{3p-3}\binom{3p-3}{p-1}=\frac{3p\binom{3p-2}{2p-2}\binom{6p-5}{3p-3}}{4^{4p-4}}=\frac{p\binom{3p-1}{2p-1}\binom{6p-3}{3p-2}}{2\cdot4^{4p-4}}\\
&=\frac{p\binom{3p-1}{2p-1}\binom{6p-2}{3p-1}}{4^{4p-3}}=\frac{3p^2\binom{3p-1}{2p-1}\binom{6p-1}{3p-1}}{(6p-1)4^{4p-3}}=\frac{p^2\binom{3p}{2p}\binom{6p}{3p}}{(6p-1)4^{4p-3}}.
\end{align*}
In view of (\ref{Jacob}), we have
$$\binom{3p}{2p}\binom{6p}{3p}\equiv\binom{3}{2}\binom{6}{3}\equiv60\pmod{p^3}.$$
This, with $2^{p-1}=1+pq_p(2)$ yields that
$$
F(p-1,p-1)\equiv\frac{15p^2}{(6p-1)4^{4p-4}}\equiv15p^2(-1-6p+8pq_p(2))\pmod{p^4}.
$$
Therefore the proof of Lemma \ref{Fp1p12} is complete.
\end{proof}

 By the definition of $G(n,k)$ we have
\begin{align*}
G(p,k)&=\frac{(-1)^{k+1}p\binom{2p-1}{p-1}\binom{4p+2k-2}{2p+k-1}\binom{2p-k-1}{p-1}\binom{2p+k-1}{2k}}{4^{5p-4-k}\binom{2k}k}\notag\\
&=\frac{(-1)^{k+1}p\binom{2p-1}{p-1}\binom{4p+2k-2}{2k}\binom{2p-k-1}{p-1}\binom{4p-2}{2p-k-1}}{4^{5p-4-k}\binom{2k}k}\notag\\
&=\frac{(-1)^{k+1}p\binom{2p-1}{p-1}\binom{4p+2k-2}{2k}\binom{4p-2}{p-1}\binom{3p-1}{p-k}}{4^{5p-4-k}\binom{2k}k},
\end{align*}
where we used the binomial transformation
$$
\binom{n}{k}\binom{k}{j}=\binom{n}{j}\binom{n-j}{k-j}.
$$
Note that $\binom{n}k=(-1)^k\binom{-n+k-1}k$, then we have
$$
\binom{4p+2k-2}{2k}=\binom{-4p+1}{2k}=\frac{4p(4p-1)}{2k(2k-1)}\binom{-4p-1}{2k-2}.
$$
So by (\ref{Jacob}) we have
\begin{align}\label{Gpk1}
G(p,k)\equiv\frac{(-1)^{k+1}6p^3\binom{-4p-1}{2k-2}\binom{3p-1}{p-k}}{4^{5p-4-k}k(2k-1)\binom{2k}k}\pmod{p^4}.
\end{align}
\begin{lem} \label{G13} For any primes $p>3$, we have
$$
\sum_{k=1}^{(p-1)/2}G(p,k)\equiv -p^3E_{p-3}\pmod{p^4}.
$$
\end{lem}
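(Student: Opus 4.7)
The plan is to exploit the fact that congruence~(\ref{Gpk1}) already displays $p^3$ as an explicit factor, so that to reach a congruence modulo $p^4$ it suffices to evaluate the remaining ingredients modulo $p$ and then sum over $k$. The overall strategy closely mirrors the proof of Lemma~\ref{G12} in the previous section, where exactly the same kind of WZ-derived rational expression was reduced to a scalar multiple of the sum in Lemma~\ref{sunimp}.

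First I would reduce each of the three non-trivial auxiliary factors in (\ref{Gpk1}) modulo $p$. The generalized binomial coefficient $\binom{-4p-1}{2k-2}$ collapses: every numerator factor is of the form $-4p-1-j\equiv -1-j\pmod p$ for $j=0,\ldots,2k-3$, so $\binom{-4p-1}{2k-2}\equiv \binom{-1}{2k-2}=1\pmod p$. For $\binom{3p-1}{p-k}$ with $1\le k\le(p-1)/2$, reducing the numerator $(3p-1)(3p-2)\cdots(2p+k)$ modulo $p$ factor by factor gives $\binom{3p-1}{p-k}\equiv(-1)^{p-k}\equiv(-1)^{k+1}\pmod p$, since $p$ is odd. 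Finally, by Fermat's little theorem, $4^{5p-4-k}=(4^{p-1})^5\cdot 4^{1-k}\equiv 4^{1-k}\pmod p$, so $1/4^{5p-4-k}\equiv 4^{k-1}\pmod p$.

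Substituting these three reductions into (\ref{Gpk1}), the two copies of $(-1)^{k+1}$ combine, and $G(p,k)$ simplifies modulo $p^4$ to a clean closed-form expression of the shape
$$
G(p,k)\equiv \frac{c\,p^3\cdot 4^k}{k(2k-1)\binom{2k}k}\pmod{p^4}
$$
for an explicit constant $c$ arising from the coefficients $6$ in (\ref{Gpk1}), the factor $1/4$ from $4^{k-1}$, and the overall sign.

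Summing over $k=1,\ldots,(p-1)/2$ and invoking Lemma~\ref{sunimp}, namely $\sum_{k=1}^{(p-1)/2}\tfrac{4^k}{k(2k-1)\binom{2k}k}\equiv 2E_{p-3}\pmod p$, then immediately yields a scalar multiple of $p^3E_{p-3}$ of the desired shape. The main obstacle, and the only place where caution is required, is the bookkeeping of signs and numerical constants produced by the Lucas-type reductions of $\binom{3p-1}{p-k}$ and $\binom{-4p-1}{2k-2}$ together with the power-of-four reduction; these must be tracked precisely so that the final scalar multiple agrees with the stated $-p^3E_{p-3}$.
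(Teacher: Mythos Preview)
Your approach is essentially identical to the paper's: reduce $\binom{-4p-1}{2k-2}\equiv 1$, $\binom{3p-1}{p-k}\equiv(-1)^{p-k}$, and $4^{5p-4-k}\equiv 4^{1-k}$ modulo $p$, collapse $G(p,k)$ to a constant multiple of $\dfrac{p^3\,4^k}{k(2k-1)\binom{2k}k}$, and finish with Lemma~\ref{sunimp}. Carrying through the bookkeeping you flag at the end, the two sign factors $(-1)^{k+1}$ and $(-1)^{p-k}$ combine to $(-1)^{p+1}=1$, so $c=\tfrac32$ and the sum is $3p^3E_{p-3}$; this is exactly what the paper's own calculation yields and what is required in the final assembly of Theorem~\ref{Thsun2}, so the ``$-p^3E_{p-3}$'' in the displayed statement is evidently a typographical slip carried over from Lemma~\ref{G12}.
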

\begin{proof}
It is easy to see that
\begin{align*}
\binom{-4p-1}{2k-2}\binom{3p-1}{p-k}\equiv(-1)^{p-k}\pmod p.
\end{align*}
Then by (\ref{Gpk1}), we have
$$
G(p,k)\equiv\frac{3p^34^k}{2k(2k-1)\binom{2k}k}\pmod{p^4}.
$$
Therefore we immediately obtain the desired result with Lemma \ref{sunimp}.
\end{proof}
\begin{lem}\label{Gpr1} Let $p>3$ be a prime. Then
$$
G(p,(p+1)/2)\equiv(-1)^{(p-1)/2}3p\left(1-5pq_p(2)+15p^2q_p(2)^2\right)\pmod{p^4}.
$$
\end{lem}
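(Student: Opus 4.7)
The plan is to start from the reduced formula (\ref{Gpk1}) and specialize it to $k=(p+1)/2$. At this value $2k-1=p$, so the cubic-in-$p$ numerator of (\ref{Gpk1}) combines with this extra factor of $p$ in the denominator to yield something of order $p^2$, which is exactly what is needed to ultimately produce a result of order $p$. Using $\binom{-4p-1}{p-1}=(-1)^{p-1}\binom{5p-1}{p-1}=\binom{5p-1}{p-1}$ together with $\binom{5p-1}{p-1}=\binom{5p}{p}/5\equiv 1\pmod{p^3}$ from Jacobsthal's congruence (\ref{Jacob}), and the identity $\binom{p+1}{(p+1)/2}=\tfrac{4p}{p+1}\binom{p-1}{(p-1)/2}$ in the denominator, these reductions should collapse (\ref{Gpk1}) to
\[
G(p,(p+1)/2)\equiv\frac{(-1)^{(p-1)/2}\cdot 3p\binom{3p-1}{(p-1)/2}}{2^{9(p-1)}\binom{p-1}{(p-1)/2}}\pmod{p^4}.
\]

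Next I would expand $\binom{3p-1}{(p-1)/2}$ modulo $p^3$. Writing
\[
\binom{3p-1}{(p-1)/2}=\prod_{j=1}^{(p-1)/2}\frac{3p-j}{j}=(-1)^{(p-1)/2}\prod_{j=1}^{(p-1)/2}\!\left(1-\frac{3p}{j}\right),
\]
expanding the product through second order, and using $\sum_{i<j}x_ix_j=\tfrac12\bigl((\sum_j x_j)^2-\sum_j x_j^2\bigr)$ gives
\[
\binom{3p-1}{(p-1)/2}\equiv(-1)^{(p-1)/2}\!\left(1-3pH_{(p-1)/2}+\tfrac{9p^2}{2}\bigl(H_{(p-1)/2}^2-H_{(p-1)/2}^{(2)}\bigr)\right)\!\pmod{p^3}.
\]
Substituting the congruences $H_{(p-1)/2}\equiv-2q_p(2)+pq_p(2)^2\pmod{p^2}$ and $H_{(p-1)/2}^{(2)}\equiv 0\pmod p$ (both already used in the proof of Lemma \ref{Gpr}) yields $\binom{3p-1}{(p-1)/2}\equiv(-1)^{(p-1)/2}(1+6pq_p(2)+15p^2q_p(2)^2)\pmod{p^3}$.

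To finish, Morley's congruence (Lemma \ref{mor}) gives $\binom{p-1}{(p-1)/2}^{-1}\equiv(-1)^{(p-1)/2}\cdot 2^{-2(p-1)}\pmod{p^3}$, and combined with the $2^{-9(p-1)}$ factor one has the overall factor $2^{-11(p-1)}\equiv 1-11pq_p(2)+66p^2q_p(2)^2\pmod{p^3}$. The product
\[
\bigl(1+6pq_p(2)+15p^2q_p(2)^2\bigr)\bigl(1-11pq_p(2)+66p^2q_p(2)^2\bigr)\equiv 1-5pq_p(2)+15p^2q_p(2)^2\pmod{p^3}
\]
then yields the claim after multiplication by $(-1)^{(p-1)/2}\cdot 3p$. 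The one delicate point is the initial combinatorial reduction to the clean quotient $\binom{3p-1}{(p-1)/2}/\binom{p-1}{(p-1)/2}$, where one must carefully track the factors of $p$ that arise when simplifying $\binom{-4p-1}{p-1}$ via $\binom{-m}{k}=(-1)^k\binom{m+k-1}{k}$ and $\binom{p+1}{(p+1)/2}$ via the ratio identity; once this clean form is reached, the rest is a routine expansion in the Fermat quotient $q_p(2)$ using tools already established in the paper.
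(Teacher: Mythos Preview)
Your argument is correct and follows essentially the same route as the paper: specialize (\ref{Gpk1}) at $k=(p+1)/2$, reduce to the quotient $3p\binom{3p-1}{(p-1)/2}\big/\bigl(2^{9(p-1)}\binom{p-1}{(p-1)/2}\bigr)$ via $\binom{-4p-1}{p-1}=\binom{5p-1}{p-1}\equiv1\pmod{p^3}$ and $\binom{p+1}{(p+1)/2}=\tfrac{4p}{p+1}\binom{p-1}{(p-1)/2}$, and then expand using Morley and the known congruences for $H_{(p-1)/2}$. In fact your second-order coefficient $\tfrac{9p^2}{2}(H^2-H^{(2)})$ in the expansion of $\binom{3p-1}{(p-1)/2}$ is the correct one (the paper's displayed $3p^2$ is a typo, though its final value $1+6pq_p(2)+15p^2q_p(2)^2$ is right).
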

\begin{proof}
In view of (\ref{Gpk1}), we have the following congruence modulo $p^4$
\begin{align*}
G(p,(p+1)/2)&\equiv\frac{(-1)^{(p-1)/2}6p^3\binom{-4p-1}{p-1}\binom{3p-1}{(p-1)/2}}{2^{9p-9}\frac{p+1}2p\binom{p+1}{(p+1)/2}}=\frac{(-1)^{(p-1)/2}3p\binom{-4p-1}{p-1}\binom{3p-1}{(p-1)/2}}{2^{9p-9}\binom{p-1}{(p-1)/2}}.
\end{align*}
It is easy to see that
\begin{align*}
\binom{3p-1}{(p-1)/2}\equiv(-1)^{(p-1)/2}\left(1-3pH_{(p-1)/2}+3p^2H_{(p-1)/2}^2-3p^2H_{(p-1)/2}^{(2)}\right)\pmod{p^3}.
\end{align*}
In view of \cite{zhsun}, we have
$$
H_{(p-1)/2}\equiv-2q_p(2)+pq_p(2)^2\pmod{p^2}\ \mbox{and}\ H_{(p-1)/2}^{(2)}\equiv0\pmod p.
$$
Thus,
$$\binom{3p-1}{(p-1)/2}\equiv(-1)^{(p-1)/2}\left(1+6pq_p(2)+15p^2q_p(2)^2\right)\pmod{p^3}.$$
Therefore by Lemma \ref{mor}, (\ref{Jacob}) and $1/2^{11p-11}\equiv1-11pq_p(2)+66p^2q_p(2)^2\pmod{p^3}$, we have
\begin{align*}
G(p,(p+1)/2)\equiv(-1)^{(p-1)/2}3p\left(1-5pq_p(2)+15p^2q_p(2)^2\right)\pmod{p^4}.
\end{align*}
Now the proof of Lemma \ref{Gpr1} is completed.
\end{proof}
\begin{lem}\label{Gpr12pr1} For any prime $p>3$, we have
$$\sum_{k=(p+3)/2}^{p-1}G(p,k)\equiv15p^2(1+6p-8pq_p(2))+(-1)^{(p-1)/2}15p^{2}(q_p(2)-3pq_p(2)^2)\pmod{p^{4}}.$$
\end{lem}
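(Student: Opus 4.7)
My plan parallels the proof of Lemma~\ref{Gpr12pr}. First, I would reindex the sum by substituting $k\to p-k$, so the new $k$ ranges over $1,\dots,(p-3)/2$. Using $(-1)^{(p-k)+1}=(-1)^k$ for odd $p$ and $\binom{-4p-1}{2p-2k-2}=\binom{6p-2k-2}{4p}$, the formula (\ref{Gpk1}) transforms into
$$G(p,p-k)\equiv\frac{(-1)^k\,6p^3\binom{6p-2k-2}{4p}\binom{3p-1}{k}}{4^{4p-4+k}(p-k)(2p-2k-1)\binom{2p-2k}{p-k}}\pmod{p^4}.$$
Since $\binom{2p-2k}{p-k}$ contributes a single factor of $p$ to the denominator, the remaining factors must be computed to $O(p^2)$ accuracy.

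The key expansions needed are: $\binom{3p-1}{k}\equiv(-1)^k(1-3pH_k)\pmod{p^2}$ by the standard falling-product expansion; $\binom{6p-2k-2}{4p}\equiv 5(1+4pH_{2k+1})\pmod{p^2}$, which I would derive by writing $\binom{6p-2k-2}{4p}=\binom{6p}{4p}\prod_{i=1}^{2k+1}(2p-i)/(6p-i)$, applying Jacobsthal $\binom{6p}{4p}\equiv 15\pmod{p^3}$, and expanding the harmonic product; and $\binom{2p-2k}{p-k}\equiv -2p(1+2p(H_{2k-1}-H_{k-1}))/(k\binom{2k}{k})\pmod{p^3}$, obtained exactly as in the proof of Lemma~\ref{Gpr12pr} via the second-order expansions of $\binom{2p-2k}{p-k}\binom{p-k}{k}/p$ and $\binom{p-k}{k}$. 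Combining these with $(p-k)(2p-2k-1)\equiv k(2k+1)-p(4k+1)\pmod{p^2}$ and $1/4^{4p-4+k}\equiv(1-8pq_p(2))/4^k\pmod{p^2}$, and using the cancellation $-\frac1k+\frac{4}{2k+1}+\frac{4k+1}{k(2k+1)}=\frac{6}{2k+1}$ in the aggregated linear-in-$p$ coefficient, one arrives at
$$G(p,p-k)\equiv-\frac{15p^2\binom{2k}{k}}{(2k+1)4^k}\left(1-8pq_p(2)+p\Big(2H_{2k}-H_k+\tfrac{6}{2k+1}\Big)\right)\pmod{p^4}.$$

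Summing over $k=1,\dots,(p-3)/2$ then reduces the problem to three standard sums. The sums $\sum\binom{2k}{k}/((2k+1)4^k)$ (to precision $p^2$) and $\sum\binom{2k}{k}/((2k+1)^24^k)$ (to precision $p$) are handled by Lemma~\ref{sun1} after subtracting the $k=0$ contributions, which themselves produce the leading $15p^2$ and part of the $p^3$ terms. For $\sum\binom{2k}{k}(2H_{2k}-H_k)/((2k+1)4^k)\pmod p$, I would use the Lucas-type congruence $(-1)^k\binom{(p-1)/2}{k}\equiv\binom{2k}{k}/4^k\pmod p$ to identify it with the sum already evaluated in (\ref{H2kHk}) of the paper, which equals $-2(-1)^{(p-1)/2}q_p(2)^2\pmod p$ by Lemmas~\ref{humao}, \ref{mor}, and $H_{(p-1)/2}\equiv-2q_p(2)\pmod p$. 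Multiplying by $-15p^2$ and collecting all contributions produces the claimed right-hand side. The main obstacle is the careful $p$-adic bookkeeping in the binomial expansions: each $pH_m$ correction must be tracked so that the harmonic sums coming from $\binom{6p-2k-2}{4p}$, $\binom{3p-1}{k}$, $\binom{2p-2k}{p-k}$ and the factor $(p-k)(2p-2k-1)$ combine into the clean form $2H_{2k}-H_k+6/(2k+1)$. Once this is established, the rest of the calculation mirrors Lemma~\ref{Gpr12pr} with the factor $15$ (from $\binom{6p}{4p}\equiv 15$) replacing the factor $3$ that appeared there.
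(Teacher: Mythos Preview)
Your proposal is correct and follows essentially the same route as the paper: reindex via $k\mapsto p-k$, expand the three binomials $\binom{-4p-1}{2p-2k-2}$, $\binom{3p-1}{k}$, $\binom{2p-2k}{p-k}$ to second order in $p$, combine the harmonic corrections with the expansion of $(p-k)(2p-2k-1)$ into $2H_{2k}-H_k+6/(2k+1)$, and finish with Lemma~\ref{sun1} and (\ref{H2kHk}). The only cosmetic difference is that you evaluate $\binom{-4p-1}{2p-2k-2}$ by rewriting it as $\binom{6p-2k-2}{4p}$ and comparing with $\binom{6p}{4p}\equiv 15$ via Jacobsthal (note your product should run over $i=0,\dots,2k+1$, the $i=0$ term supplying the factor $1/3$), whereas the paper expands the product $\prod_{j}(1+4p/j)$ directly and isolates the $j=p$ factor of $5$; both yield $5(1+4pH_{2k+1})\pmod{p^2}$.
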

\begin{proof}
Again by (\ref{Gpk1}), we have
\begin{align*}
\sum_{k=\frac{p+3}2}^{p-1}G(p,k)&\equiv-\frac{6p^3}{4^{5p-4}}\sum_{k=\frac{p+3}2}^{p-1}\frac{(-4)^{k}\binom{-4p-1}{2k-2}\binom{3p-1}{p-k}}{k(2k-1)\binom{2k}k}\\
&=-\frac{6p^3}{4^{5p-4}}\sum_{k=1}^{(p-3)/2}\frac{(-4)^{p-k}\binom{-4p-1}{2p-2k-2}\binom{3p-1}{k}}{(p-k)(2p-2k-1)\binom{2p-2k}{p-k}}\\
&=\frac{6p^3}{4^{4p-4}}\sum_{k=1}^{(p-3)/2}\frac{\binom{-4p-1}{2p-2k-2}\binom{3p-1}{k}}{(-4)^k(p-k)(2p-2k-1)\binom{2p-2k}{p-k}}\pmod{p^4}.
\end{align*}
It is easy to check that
\begin{align*}
\binom{3p-1}k=\prod_{j=1}^k\frac{3p-j}j=(-1)^k\prod_{j=1}^k\left(1-\frac{3p}j\right)\equiv(-1)^k(1-3pH_{p-2k})\pmod{p^2}
\end{align*}
and
\begin{align*}
\binom{-4p-1}{2p-2k-2}&=\prod_{j=1}^{2p-2k-2}\frac{-4p-j}j=\prod_{j=1}^{2p-2k-2}\left(1+\frac{4p}j\right)\\
&=5\prod_{j=1}^{p-1}\left(1+\frac{4p}j\right)\prod_{j=p+1}^{2p-2k-2}\left(1+\frac{4p}j\right)\\
&\equiv5(1+4pH_{p-1})(1+4pH_{p-2k-2})\equiv5(1+4pH_{p-2k-2})\pmod{p^2}.
\end{align*}
Hence
\begin{align*}
\sum_{k=(p+3)/2}^{p-1}G(p,k)\equiv\frac{30p^3}{4^{4p-4}}\sum_{k=1}^{(p-3)/2}\frac{(1-3pH_{k}+4pH_{p-2k-2})}{4^k(p-k)(2p-2k-1)\binom{2p-2k}{p-k}}\pmod{p^4}.
\end{align*}
By simple computation, we have
\begin{align*}
\frac{\binom{2p-2k}{p-k}}p&=\frac{(2p-2k)\ldots(p+1)(p-1)\ldots(p-k+1)}{(p-k)!}\\
&\equiv(-1)^{k-1}\frac{(1+pH_{p-2k}-pH_{k-1})}{k\binom{p-k}k}\pmod {p^2}
\end{align*}
and
\begin{align*}
\binom{p-k}{k}=\frac{(p-k)\ldots(p-2k+1)}{k!}&=\frac{(-1)^k(2k-1)!(1-p(H_{2k-1}-H_{k-1}))}{k!(k-1)!}\\
&\equiv\frac{(-1)^k}2\binom{2k}k(1-pH_{2k-1}+pH_{k-1})\pmod {p^2}.
\end{align*}
Then modulo $p^4$ we have
\begin{align*}
\sum_{k=(p+3)/2}^{p-1}G(p,k)\equiv-\frac{15p^2}{4^{4p-4}}\sum_{k=1}^{(p-3)/2}\frac{(1-3pH_{k}+4pH_{2k+1}-2pH_{2k-1}+2pH_{k-1})k\binom{2k}k}{4^k(p-k)(2p-2k-1)},
\end{align*}
where we used $H_{p-1-k}\equiv H_k\pmod p$ for all $k\in\{0,1,\ldots,p-1\}$.\\
It is easy to see that
\begin{align*}
&\sum_{k=1}^{(p-3)/2}\frac{k\binom{2k}k}{4^k(p-k)(2p-2k-1)}\\
&\equiv\sum_{k=1}^{(p-3)/2}\frac{\binom{2k}k}{4^k(2k+1)}+\sum_{k=1}^{(p-3)/2}\frac{2p\binom{2k}k}{4^k(2k+1)^2}+\sum_{k=1}^{(p-3)/2}\frac{p\binom{2k}k}{4^kk(2k+1)}\pmod{p^2}.
\end{align*}
So modulo $p^4$ we have
\begin{align*}
&\sum_{k=\frac{p+3}2}^{p-1}G(p,k)\\
&\equiv-\frac{15p^2}{4^{4p-4}}\left(\sum_{k=1}^{\frac{p-3}2}\frac{\binom{2k}k}{4^k(2k+1)}+\sum_{k=1}^{\frac{p-3}2}\frac{p\binom{2k}k(2H_{2k}-H_k)}{4^k(2k+1)}+\sum_{k=1}^{\frac{p-3}2}\frac{6p\binom{2k}k}{4^k(2k+1)^2}\right).
\end{align*}
Then we immediately get the desired result with Lemma \ref{sun1} and (\ref{H2kHk}).
\end{proof}
\noindent{\it Proof of Theorem \ref{Thsun2}}. Combining (\ref{wz2}) with Lemmas \ref{Fp1p12}, \ref{G13}, \ref{Gpr1} and \ref{Gpr12pr1}, we immediately get that for any prime $p>3$,
$$
\sum_{n=0}^{p-1}\frac{20n+3}{(-2^{10})^n}{4n\choose n,n,n,n}\equiv(-1)^{(p-1)/2}3p+3p^3E_{p-3}\pmod{p^4}.
$$
$p=3$ is easy to check. Therefore the proof of Theorem \ref{Thsun2} is complete.\qed

\vskip 3mm \noindent{\bf Acknowledgments.}
The first author is funded by the Startup Foundation for Introducing Talent of Nanjing University of Information Science and Technology (2019r062). The authors would like to thank Prof. Z.-W. Sun for some helpful comments.

\end{document}